\newcommand{\co}{\colon\,}
\newcommand{\cR}{\mathcal R}
\newcommand{\SR}{S_{\cR}}
\newcommand{\subm}{\sigma_{\cR}}
\newcommand{\linnum}{\stepcounter{thm}\tag{\thethm}}
\newtheorem{thm}{Theorem}
\newtheorem*{thm*}{Theorem}
\newtheorem{lemma}[thm]{Lemma}
\theoremstyle{definition}
\newtheorem{quest}[thm]{Question}
\begin{document}

\title{Realizing polynomial portraits}

\author[Floyd]{William Floyd}
\address{Department of Mathematics\\ Virginia Tech\\
Blacksburg, VA 24061\\ U.S.A.}
\email{floyd@math.vt.edu}
\urladdr{http://www.math.vt.edu/people/floyd}

\author[Kim]{Daniel Kim}
\address{Department of Mathematics\\ Virginia Tech\\
Blacksburg, VA 24061\\ U.S.A.}
\email{kdani90@vt.edu}

\author[Koch]{Sarah Koch}
\address{Department of Mathematics\\
University of Michigan\\
Ann Arbor, MI 48109\\ U.S.A.}
\email{kochsc@umich.edu}

\author[Parry]{Walter Parry}
\address{Department of Mathematics and Statistics\\
Eastern Michigan University\\ Ypsilanti, MI 48197\\ U.S.A.}
\email{walter.parry@emich.edu}

\author[Saenz]{Edgar Saenz}
\address{Department of Mathematics\\ Virginia Tech\\
Blacksburg, VA 24061\\ U.S.A.}
\email{easaenzm@vt.edu}

\date\today
\thanks{The research of Sarah Koch was supported in part by the NSF}
\keywords{Thurston map, ramification portrait, topological polynomial}
\subjclass[2020]{37F20, 57M12}
\maketitle

\begin{abstract}It is well known that the dynamical behavior of a rational 
map $f:\widehat{\mathbb C}\to \widehat{\mathbb C}$ is 
governed by the forward orbits of the critical 
points of $f$. The map $f$ is said to be {\em postcritically finite} 
if every critical point has finite forward orbit, or equivalently, 
if every critical point eventually maps into a periodic cycle of $f$. 
We encode the orbits of the critical points of $f$ with a finite 
directed graph called a {\em ramification portrait}. In this article, 
we study which graphs arise as ramification portraits. We prove that 
every abstract polynomial portrait is realized as the ramification 
portrait of a postcritically finite polynomial, and classify which 
abstract polynomial portraits can only be realized by unobstructed maps.
\end{abstract}

\section{Introduction}

\noindent 
Let $\widehat{\mathbb C}$ denote the Riemann sphere, and let 
$f:\widehat{\mathbb C}\to \widehat{\mathbb C}$ be a rational map of 
degree $d\geq 2$. By the Riemann-Hurwitz formula, $f$ has $2d-2$ 
critical points, counted with multiplicity; the critical set of $f$ 
is denoted $C_f$. The postcritical set of $f$, denoted $P_f$, is 
the smallest forward invariant subset of the Riemann sphere that 
contains the critical values of $f$. If the postcritical set is 
finite, the rational map is said to be {\em postcritically finite}. 
Associated to a postcritically finite rational map is a 
{\em ramification portrait}; 
that is, a finite directed graph $\Gamma_f$ that encodes the action of 
$f$ restricted to $C_f\cup P_f$. As an example, consider the polynomial 
$f:z\mapsto  z^2-2$. The critical set  is $C_f=\{0,\infty\}$, the 
postcritical set is $P_f=\{-2,2,\infty\}$, and the ramification portrait is:
\[
\xymatrix{\infty\ar@(ur,dr)^2}
\quad \xymatrix{0\ar[r]^2 &-2 \ar[r]&2\ar@(ur,dr)}  
\]

\medskip

\noindent In the portrait above, there is an edge from vertex $x$ to
vertex $y$ if and only if $y=f(x)$. This edge is weighted with the
positive integer $\mathrm{deg}_f(x)$, the local degree of $f$ at
$x$. To lighten notation, we record the weight of the edge from $x$ to
$f(x)$ if and only if $\mathrm{deg}_f(x)>1$; that is, if and only if
$x\in C_f$. The portrait above is a {\em polynomial portrait}; that
is, there is a fixed vertex mapping to itself with full degree (the
vertex $\infty$).

In this article, we study which graphs are isomorphic to portraits from
postcritically finite polynomials.  There are immediate necessary
conditions that arise from local degree restrictions, and from
Riemann-Hurwitz restrictions (see Section \ref{sect:preliminaries}).
We prove that in the polynomial setting, these conditions are also
sufficient.  A weighted finite directed graph as above which satisfies
these conditions is called an \emph{abstract polynomial portrait}.
\begin{thm}\label{main:thm:1}
Let $\Gamma$ be an abstract polynomial portrait. Then there exists a 
polynomial $f:\widehat{\mathbb C}\to\widehat{\mathbb C}$ so that 
$\Gamma_f\simeq \Gamma$. 
\end{thm}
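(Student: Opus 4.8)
The plan is to realize $\Gamma$ first by a \emph{topological} polynomial and then to apply Thurston's characterization of rational maps.

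\emph{Step 1: a topological model.} From the combinatorial data of $\Gamma$ I would build a postcritically finite, orientation-preserving branched self-covering $f\colon S^2\to S^2$ of degree $d$, together with a marked fixed point playing the role of $\infty$ and having local degree $d$, whose ramification portrait is exactly $\Gamma$. Concretely, place the vertices of $\Gamma$ other than $\infty$ as a finite marked set in $S^2\setminus\{\infty\}\cong\mathbb C$, and construct $f$ with the prescribed local degree at each vertex of $C_f$ and with the prescribed assignment $x\mapsto f(x)$ on the marked set. Such an $f$ exists precisely because the local-degree and Riemann--Hurwitz conditions built into the notion of abstract polynomial portrait are exactly what is needed to assemble a branched cover of the plane with this branch data; one can produce $f$ by an explicit cut-and-paste along a spanning system of arcs, or inductively, starting from $z\mapsto z^{d}$ and performing blow-up surgeries that redistribute the critical points into the prescribed orbit configuration. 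Care is needed so that the portrait of $f$ equals $\Gamma$ --- no stray critical points and no accidental identifications among the marked orbits --- but this step is essentially routine.

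\emph{Step 2: Thurston rigidity.} When the orbifold of $f$ is hyperbolic, Thurston's theorem says that $f$ is Thurston-equivalent to a rational map, unique up to M\"obius conjugacy, if and only if $f$ carries no Thurston obstruction; and for a topological polynomial --- one with a totally ramified fixed point --- the existence of a Thurston obstruction is equivalent to the existence of a \emph{Levy cycle} (Levy; Bielefeld--Fisher--Hubbard; Hubbard). A rational map Thurston-equivalent to such an $f$ inherits a totally ramified fixed point, hence, after conjugation, is a polynomial with $\Gamma_{f}\simeq\Gamma$. The abstract polynomial portraits whose orbifold is parabolic form a short explicit list --- essentially the portraits of power maps and of Chebyshev polynomials --- and are realized directly.

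\emph{Step 3: killing Levy cycles.} It thus suffices to realize $\Gamma$ by a topological polynomial with no Levy cycle, and this is where I expect the main difficulty to lie. A Levy cycle is a cyclic family of disjoint, essential, non-peripheral simple closed curves in $S^2\setminus P_{f}$, each isotopic rel $P_{f}$ to a component of its $f$-preimage that maps with degree one. I would argue by reduction: a degree-one annular return is rigid, so if the model from Step~1 carries a Levy cycle, then cutting $S^2$ along it and regluing yields a new topological polynomial with the same ramification portrait $\Gamma$ but strictly smaller ``obstruction complexity'' (measured, say, by the number of postcritical points separated by an invariant multicurve, or by the number of curves in a maximal Levy cycle). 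Iterating terminates at an unobstructed topological polynomial, which by Step~2 is Thurston-equivalent to the desired polynomial. Pinning down the right complexity, enumerating the possible Levy cycles, and verifying that each surgery leaves $\Gamma$ unchanged is the technical heart of the argument. One can instead bypass Step~3 by promoting the portrait to an abstract Hubbard tree meeting Poirier's axioms and invoking his realization theorem; the obstacle then migrates to checking expansivity and the angle conditions, especially for low-complexity portraits.
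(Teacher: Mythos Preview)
Your overall strategy --- realize $\Gamma$ by a topological polynomial, then invoke Levy's theorem and Thurston's characterization --- matches the paper's. The divergence, and the gap, is in Step~3.

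The paper does \emph{not} start from an arbitrary topological realization and then try to remove Levy cycles by surgery. Instead it builds the topological polynomial from the outset as the subdivision map $\sigma_{\mathcal R}$ of an explicitly constructed finite subdivision rule. The $1$-skeleton of the model complex is a star with central vertex $\infty$ and one sticker per finite postcritical vertex; the finite postcritical vertices are ordered into ``chains'' according to a careful recursive rule, and the subdivision $\mathcal R(t)$ of the tile type is assembled in three stages so that particular ``new'' edges appear in prescribed places. The key lemma (Lemma~\ref{lem:Levyedge} in the paper) is that any curve in a Levy cycle for a subdivision map can be isotoped rel $P_f$ off every new edge of every iterated subdivision $\mathcal R^n(S_{\mathcal R})$; this is proved by a short intersection-number argument exploiting the degree-one condition. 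The stage-1 edges are placed so that, after finitely many subdivisions, every finite postcritical point except possibly one is either joined to $\infty$ by a new edge or enclosed alone in a bigon of new edges. Hence no Levy disk can contain two postcritical points, and there is no Levy cycle. No surgery, no complexity descent --- the construction is rigged to be unobstructed from the start.

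Your Step~3, by contrast, is not an argument yet. ``Cut along the Levy cycle and reglue'' is not a well-defined operation on a branched cover: you must specify a regluing map, and you must explain why the result still has ramification portrait $\Gamma$ (the critical orbits pass through the Levy annuli, so this is not automatic), why the original Levy cycle is destroyed, and why no new one is created --- or, failing that, why some well-founded complexity strictly drops. You acknowledge this is ``the technical heart,'' but you have not supplied it, and I know of no such surgery/complexity mechanism in the literature. The Poirier/Hubbard-tree alternative you mention is a genuine route, but it shifts the entire burden to producing an admissible abstract Hubbard tree from $\Gamma$, which is comparable in difficulty to what the paper actually does.
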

\noindent To prove Theorem \ref{main:thm:1}, we  construct an explicit 
{\em topological polynomial} $g:S^2 \to S^2$ so that $\Gamma_g\simeq \Gamma$. 
We build $g$ so that it has no {\em obstructing multicurves}. It then 
follows from Thurston's Topological Characterization of Rational Maps 
that $g$ is {\em combinatorially equivalent} to a polynomial $f$, 
so $\Gamma_f\simeq \Gamma_g$. 

We cannot strengthen Theorem \ref{main:thm:1} by removing the
hypothesis that $\Gamma$ is a polynomial portrait because of the
following two phenomena. The first is dynamical and related to
Thurston's theorem. The second is nondynamical and related to the
Hurwitz problem.

\noindent {\bf Portraits that can only be realized topologically.} 
Consider the following abstract portrait $\Gamma$. 
\[
\xymatrix{a\ar@/^0.7pc/[r]^2 & b \ar@/^0.7pc/[l]} \qquad
\xymatrix{c\ar@/^0.7pc/[r]^2 & d \ar@/^0.7pc/[l]}
\]
\medskip
Suppose there is a rational map $f:\widehat{\mathbb C}\to
\widehat{\mathbb C}$ so that $\Gamma\simeq \Gamma_f$.  Then $f$ is a
quadratic rational map with two periodic cycles of period 2. However,
a quick computation reveals that a quadratic rational map can have at
most one periodic cycle of period $2$, so no such $f$ exists.

Even though no rational map $f$ exists so that $\Gamma_f\simeq \Gamma$, it is 
possible to construct a topological branched cover  $g:S^2\to S^2$ 
so that $\Gamma_g\simeq \Gamma$. For example, after identifying $S^2$ with 
$\widehat{\mathbb C}$, we could take the squaring map $s: z\mapsto z^2$ 
and postcompose with an orientation-preserving homeomorphism 
$h:S^2\to S^2$ so that $h(\infty)=1,h(1)=\infty,h(0)=2$, and $h(4)=0$. 
Then $g:=h\circ s$ is a branched cover with $\Gamma_g\simeq \Gamma$. By 
Thurston's theorem, the map $g$ will necessarily admit an obstructing 
multicurve (see Section \ref{sect:preliminaries}). 

We are aware of a few methods to construct portraits that can only be 
realized topologically that are similar in spirit to the example above. 
It would be interesting to put these examples into a more general context. 

\begin{quest}
Which abstract portraits $\Gamma$ can only be realized topologically? 
\end{quest}

\noindent {\bf Portraits that cannot even be realized topologically.}
The {\em Hurwitz problem} is to characterize which {\em branch data}
arise from branched covering maps $S^2\to S^2$.  See \cite{Bar},
\cite{EKS}, \cite{KZ} and \cite{Thom}.  For example, it is known that
there is no branched cover with the branch data $(2,2),(2,2),(3,1)$.
That is, there is no branched cover $f:S^2\to S^2$ of degree $4$ with
exactly three critical values $\{v_1,v_2,v_3\}\subseteq S^2$, so that
\begin{itemize}
\item $f^{-1}(\{v_1\})$ contains exactly two points, each mapping forward 
with local degree two, 
\item 
$f^{-1}(\{v_2\})$ contains exactly two points, each mapping forward with 
local degree two, and 
\item $f^{-1}(\{v_3\})$ contains exactly two points, one mapping forward 
with local degree 3, and the other mapping forward with local degree 1. 
\end{itemize}
This fact has  dynamical consequences. Indeed, any abstract portrait 
$\Gamma$ with this branch data cannot be the portrait of a branched 
covering map $S^2\to S^2$, and therefore, $\Gamma$ cannot be the portrait 
of a rational map $\widehat{\mathbb C}\to \widehat{\mathbb C}$. 
For example, the following portrait has branch data $(2,2),(2,2),(3,1)$.
 \begin{eqnarray*}
\xymatrix@!R=3pt@!C=7pt{v_1\ar[rd]^2 \\
&v_3\ar[r] &v_4\ar@(ur,dr)\\
v_2\ar[ru]^2 }\qquad 
\xymatrix@!R=3pt@!C=7pt{v_5\ar[rd]^2 \\
&v_7\ar[r] &v_8\ar@/^0.6pc/[r] & v_9 \ar@/^0.6pc/[l]\\
v_6\ar[ru]^2 }\qquad 
\xymatrix@C=13pt{\\
v_{10}\ar[r]^3 &v_{11}\ar@(ur,dr)}
\end{eqnarray*}
\medskip

\noindent While the general Hurwitz problem is unsolved, we note that
all polynomial branch data are realizable (see Proposition 5.2 in
\cite{EKS}). We will not use this fact to construct the branched cover
$g:S^2\to S^2$ in the proof of Theorem \ref{main:thm:1}.

\smallskip

\noindent{\bf Thurston's theorem.} Let $S^2$ denote an oriented topological 
2-sphere, and let $f\co S^2 \to S^2$ be 
an orientation-preserving branched cover of degree $d\ge 2$ so that 
the postcritical set $P_f$ is finite. We call such a map $f$ a 
{\em Thurston map}.  For convenience in stating the theorem, we assume that
the orbifold of $f$ is hyperbolic\footnote{This condition essentially 
excludes power maps $z\mapsto z^n$, Chebyshev maps, and Latt\`es maps; see
\cite{DH}.}.
Two Thurston maps $f:(S^2,P_f)\to (S^2,P_f)$ 
and $g:(S^2,P_g)\to (S^2,P_g)$ are {\em combinatorially equivalent} 
provided that there are orientation-preserving homeomorphisms 
$\phi_0:(S^2,P_f)\to (S^2,P_g)$ and $\phi_1:(S^2,P_f)\to (S^2,P_g)$ so that 
\begin{itemize}
\item $\phi_0\circ f = g\circ \phi_1$, and 
\item the homeomorphisms $\phi_0$ and $\phi_1$ are isotopic relative to $P_f$. 
\end{itemize}

 In the 1980s, William Thurston proved that every Thurston map $f$ is 
combinatorially equivalent to a rational map, or it is {\em obstructed}. 
In the latter case, $f$ admits an invariant curve system called an 
{\em obstructing multicurve}.

A {\em multicurve} $\Delta$ is a finite collection of simple disjoint 
curves in $S^2\setminus P_f$, no two of which are homotopic. All 
components $\delta\in \Delta$ are also required to be {\em essential}  
($\delta$ does not bound a disk), and nonperipheral ($\delta$ does not 
bound a disk with exactly one puncture). The multicurve $\Delta$ is 
said to be {\em invariant} for $f$ provided that for all 
$\delta\in\Delta$, every component of $f^{-1}(\{\delta\})$ is either
 \begin{itemize}
 \item homotopic to some $\delta'\in\Delta$ in $S^2\setminus P_f$, or 
 \item `erased'; that is, it is peripheral or inessential. 
 \end{itemize}

Given an invariant multicurve $\Delta$ for $f$, Thurston defined an 
associated linear transformation ${\mathbb R}^\Delta\to{\mathbb R}^\Delta$ 
that encodes how different components of $f^{-1}(\Delta)$ map to $\Delta$. 
The matrix for this transformation has non-negative real entries, so there 
is a leading eigenvalue $\lambda$ which is real and non-negative. The 
multicurve $\Delta$ is an {\em obstruction} provided that $\lambda\geq 1$. 
If the Thurston map $f$ admits an obstruction, $f$ is said to be 
{\em obstructed}. If not, $f$ is said to be {\em unobstructed}. 

\begin{thm*}[Thurston's Topological Characterization of Rational
Maps, \cite{DH}]Let $f:(S^2,P_f)\to
(S^2,P_f)$ be a Thurston map, and suppose that $f$ has a hyperbolic
orbifold.  Then $f$ is combinatorially equivalent to a rational map
$F$ if and only if $f$ is unobstructed. In this case, $F$ is unique up
to conjugation by M\"obius transformations.
\end{thm*}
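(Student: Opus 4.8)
The plan is to prove this following the Teichm\"uller-theoretic argument of Douady and Hubbard \cite{DH}.

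\textbf{Reformulation as a fixed-point problem.} Let $\mathcal T=\mathcal T(S^2,P_f)$ be the Teichm\"uller space of complex structures on $S^2$ with the points of $P_f$ marked, with its Teichm\"uller metric $d_{\mathcal T}$. The branched cover $f$ induces \emph{Thurston's pullback map} $\sigma_f\colon\mathcal T\to\mathcal T$: pulling back a complex structure $\tau$ on $(S^2,P_f)$ through $f$ gives a complex structure on $(S^2,f^{-1}(P_f))$, and since $P_f\subseteq f^{-1}(P_f)$, forgetting the extra marked points yields a point $\sigma_f(\tau)\in\mathcal T$. The first step is the observation that $f$ is combinatorially equivalent to a rational map if and only if $\sigma_f$ has a fixed point: a fixed point $\tau=\sigma_f(\tau)$ is exactly a complex structure in which $f$ is holomorphic, so uniformizing $(S^2,\tau)$ identifies $f$ with a rational map $F$, the markings of $P_f$ supplying the homeomorphisms $\phi_0,\phi_1$; conversely a combinatorial equivalence to a rational map transports the structure of $\widehat{\mathbb C}$ back to such a fixed point. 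Two structural properties of $\sigma_f$ are then established by Teichm\"uller-theoretic generalities: $\sigma_f$ factors as the isometric embedding induced by $f^*$ into $\mathcal T(S^2,f^{-1}(P_f))$ followed by the holomorphic forgetful submersion back to $\mathcal T$, hence is holomorphic and, by Royden's theorem, $1$-Lipschitz; moreover the forgetful map is strictly distance-decreasing except in a degenerate situation that the hyperbolic-orbifold hypothesis excludes, so $\sigma_f$ is nonexpanding and strictly contracting away from that locus.

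\textbf{Rational maps are unobstructed.} Suppose $F$ is rational with hyperbolic orbifold and $\Delta$ is an $F$-invariant multicurve. Here $|P_F|\ge 3$, so both $\widehat{\mathbb C}\setminus P_F$ and $\widehat{\mathbb C}\setminus F^{-1}(P_F)$ carry hyperbolic metrics; $F$ restricts to an unbranched covering between these punctured spheres, hence is a local isometry for the two hyperbolic metrics, while the inclusion $\widehat{\mathbb C}\setminus F^{-1}(P_F)\hookrightarrow\widehat{\mathbb C}\setminus P_F$ is a strict contraction by the Schwarz--Pick lemma. Comparing the hyperbolic geodesic lengths of the curves of $\Delta$ with those of the components of their $F$-preimages through these two facts produces, for the nonnegative length vector, a strict inequality forcing the leading eigenvalue of the Thurston transition matrix $F_\Delta$ to be $<1$. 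So no invariant multicurve is an obstruction, which gives the ``only if'' implication.

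\textbf{Existence of a fixed point when $f$ is unobstructed.} Fix a basepoint $\tau_0$ and iterate, $\tau_n:=\sigma_f^n(\tau_0)$; since $\sigma_f$ is $1$-Lipschitz, the distances $d_{\mathcal T}(\tau_{n+1},\tau_n)$ are nonincreasing. Either the orbit $(\tau_n)$ stays in a compact subset of $\mathcal T$ or it escapes to the boundary. In the compact case a short argument using that $\sigma_f$ is nonexpanding, and strictly so except on the locus excluded by hyperbolicity of the orbifold, shows that a subsequential limit is fixed by $\sigma_f$, and the reformulation then produces $F$. Thus everything reduces to showing that an escaping orbit forces $f$ to be obstructed, which is the technical heart. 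If $\tau_{n_k}\to\infty$ in moduli space, then by Mumford's compactness criterion the hyperbolic surface $X_{n_k}$ underlying $\tau_{n_k}$ has simple closed geodesics below any fixed Margulis constant; let $\Gamma_{n_k}$ be the multicurve they form. Analyzing how $f$ relates these short curves on consecutive surfaces: a component $\gamma'$ of $f^{-1}(\gamma)$, $\gamma\in\Gamma_{n_k}$, is essential and nonperipheral on $X_{n_k+1}$, and the collar lemma together with the local-isometry and Schwarz--Pick comparison above gives two-sided control of its hyperbolic length in terms of $(\deg f|_{\gamma'})^{-1}$ times that of $\gamma$ up to an error negligible as the surfaces degenerate, while preimage curves that fail to become short have length bounded below. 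Passing to a subsequence along which the combinatorial pattern of the $\Gamma_{n_k}$ and of their preimages stabilizes, one extracts a fixed nonempty multicurve $\Gamma$ that is $f$-invariant in Thurston's sense, and the length estimates assemble into the assertion that the associated Thurston matrix has a nonnegative eigenvector with eigenvalue $\ge 1$, so $\Gamma$ is an obstruction --- contradiction. The main obstacle is exactly this degeneration analysis: simultaneously controlling the thin parts of $X_{n_k}$ and of $X_{n_k+1}$, the branching of $f$ over those thin parts, and the combinatorial stabilization to a genuine invariant multicurve with the eigenvalue bound.

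\textbf{Uniqueness.} Suppose rational maps $F_1,F_2$ with hyperbolic orbifolds are combinatorially equivalent via $(\phi_0,\phi_1)$. Isotope $\phi_0$ to a quasiconformal homeomorphism and lift it repeatedly through $F_1$ on the source and $F_2$ on the target; this yields quasiconformal maps of uniformly bounded dilatation conjugating $F_1$ to $F_2$ off larger and larger finite sets, and a normal-families limit gives a quasiconformal $\psi$ with $\psi\circ F_1=F_2\circ\psi$ and $F_1$-invariant Beltrami coefficient. Hyperbolicity of the orbifold of $F_1$ rules out a measurable $F_1$-invariant line field --- equivalently, $\sigma_{F_1}$ has a unique fixed point, so the $\psi$-translates of the common structure must agree --- forcing the Beltrami coefficient to vanish. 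Hence $\psi$ is a M\"obius transformation conjugating $F_1$ to $F_2$, completing the proof.
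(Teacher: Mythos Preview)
The paper does not prove this theorem; it is quoted as a background result with a citation to Douady--Hubbard \cite{DH}, and no argument is given.  So there is no ``paper's own proof'' to compare against.

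Your sketch is a reasonable outline of the Douady--Hubbard argument and hits the main beats: the reformulation via the pullback map $\sigma_f$ on Teichm\"uller space, nonexpansion of $\sigma_f$, the dichotomy between a bounded orbit (yielding a fixed point) and an escaping orbit (producing an obstruction from the short curves), and the uniqueness via invariant line fields.  Two places are looser than the actual proof and would need care if you were to write this out in full.  First, in the bounded-orbit case you invoke a ``subsequential limit is fixed'' argument from mere nonexpansion plus occasional strict contraction; what Douady--Hubbard actually prove is that, under the hyperbolic-orbifold hypothesis, the coderivative of $\sigma_f$ (on integrable quadratic differentials) has operator norm strictly less than $1$ at every point, and they then need an additional argument to upgrade this infinitesimal contraction to convergence of the full orbit --- a subsequential limit of a $1$-Lipschitz map need not be fixed in general.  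Second, the degeneration analysis you describe is correct in spirit, but the extraction of a single \emph{fixed} invariant multicurve with Thurston eigenvalue $\ge 1$ from the varying short-curve systems $\Gamma_{n_k}$ is exactly the delicate part of \cite{DH}, and ``passing to a subsequence along which the combinatorial pattern stabilizes'' hides real work (there are only finitely many homotopy classes of multicurves up to the mapping class group, not in absolute terms).  These are the expected soft spots in any sketch of this theorem; as a high-level summary your proposal is accurate.
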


\smallskip
\noindent {\bf Levy cycles.} For a given Thurston map $f:(S^2,P_f)\to
(S^2,P_f)$, verifying the criterion in Thurston's theorem is difficult
as it involves an infinite search in general. In this article, we will
work with Thurston maps that are {\em topological polynomials}; that
is, there is some $\omega\in S^2$ that is a fully ramified fixed point
of $f$. More is known about Thurston's criterion in the case of
topological polynomials.

A {\em Levy cycle} for the Thurston map $f:(S^2,P_f)\to (S^2,P_f)$ is 
a circularly ordered collection of simple closed curves 
$\{\delta_0,\ldots, \delta_{n-1},\delta_{n}=\delta_0\}$ on 
$S^2\setminus P_f$ such that 
\begin{itemize}
\item no two curves are homotopic relative to $P_f$,  
\item the curves are pairwise disjoint, 
\item each curve is essential and nonperipheral, and 
\item for all $1\leq i\leq n$, at least one component of $f^{-1}(\delta_i)$ 
is homotopic to $\delta_{i-1}$ and maps to $\delta_i$ by degree $1$. 
\end{itemize}

Silvio Levy proved the following results in his thesis, \cite{Levy}. 

\begin{thm*}[Levy] Let $f:(S^2,P_f)\to (S^2,P_f)$ be a Thurston map
that is a topological polynomial. Then $f$ is obstructed if and only
if $f$ admits a Levy cycle.
\end{thm*}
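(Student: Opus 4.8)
\smallskip
\noindent\textbf{Proof proposal.} The plan is to treat the two implications separately; only the direction ``obstructed $\Rightarrow$ Levy cycle'' uses that $f$ is a topological polynomial.

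\emph{Levy cycle $\Rightarrow$ obstructed.} Suppose $f$ admits a Levy cycle $\{\delta_0,\dots,\delta_{n-1},\delta_n=\delta_0\}$. Its curves are pairwise disjoint, pairwise non-homotopic, essential and non-peripheral, so they span a multicurve $\Delta_0$. Using the standard equivalence between the notion of obstruction above and the existence of some (not necessarily invariant) multicurve $\Gamma$ for which the Thurston transition matrix --- built by recording, for each $\gamma\in\Gamma$, only the components of $f^{-1}(\gamma)$ homotopic into $\Gamma$ --- has leading eigenvalue at least $1$, it suffices to work with $\Delta_0$. In that matrix, the defining property of a Levy cycle contributes, for each $i$, a summand $1/\mathrm{deg}(f|_{\gamma'})=1$ coming from the degree-one component $\gamma'$ of $f^{-1}(\delta_i)$ homotopic to $\delta_{i-1}$; hence the principal submatrix indexed by $\delta_0,\dots,\delta_{n-1}$ dominates entrywise the permutation matrix of the $n$-cycle $\delta_0\to\delta_1\to\cdots\to\delta_0$, whose leading eigenvalue is $1$. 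So the leading eigenvalue of the full matrix is at least $1$, and $f$ is obstructed.

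\emph{Obstructed $\Rightarrow$ Levy cycle.} Suppose the topological polynomial $f$ is obstructed and fix an obstruction $\Delta$. Since a non-negative matrix with leading eigenvalue at least $1$ has an irreducible principal submatrix with the same property, I would replace $\Delta$ by the corresponding sub-multicurve, and so assume that its transition graph $G$ --- with an edge $\delta\to\delta'$, of ``degree'' $\mathrm{deg}(f|_{\gamma'})$, for each component $\gamma'$ of $f^{-1}(\delta')$ homotopic to $\delta$ --- is strongly connected, with Perron--Frobenius eigenvector $v>0$ and eigenvalue $\lambda\ge 1$. Next I would bring in the polynomial structure: $f$ has a fully ramified fixed point $\omega$ with $f^{-1}(\omega)=\{\omega\}$, so every curve disjoint from $P_f$ bounds a unique disk $D$ avoiding $\omega$; every component of $f^{-1}(D)$ is again such a disk, mapped properly onto $D$; and for fixed $\delta\in\Delta$ the degrees of these restrictions over $D_\delta$ sum to $d$. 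Applying Riemann--Hurwitz to a restriction $f\colon D_{\gamma'}\to D_{\delta'}$ of degree $k$, and using $P_f\subseteq f^{-1}(P_f)$, gives for every $\delta\in\Delta$ homotopic to $\gamma'$
\[
|P_f\cap D_\delta|-1\ \le\ k\,\bigl(|P_f\cap D_{\delta'}|-1\bigr),
\]
with equality exactly when $D_{\gamma'}$ contains no point of $f^{-1}(P_f)\setminus P_f$. Consequently, along any directed cycle $\delta_0\to\delta_1\to\cdots\to\delta_0$ of $G$ all of whose edges have degree one, these inequalities chain up to $|P_f\cap D_{\delta_0}|\le|P_f\cap D_{\delta_1}|\le\cdots\le|P_f\cap D_{\delta_0}|$, forcing all the counts equal and all the inequalities to be equalities; unwinding the definitions, such a cycle is precisely a Levy cycle.

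Thus the proof reduces to the assertion that an irreducible obstruction of a topological polynomial must contain a directed cycle in $G$ all of whose edges have degree one. This is the heart of the matter, and I expect it to be the main obstacle. The intended mechanism is a ``conservation of mass'' estimate: weighting each $\delta$ by $w_\delta=|P_f\cap D_\delta|-1\ge 1$, the displayed inequality says a degree-$k$ preimage can carry at most $1/k$ of the weight that a degree-one preimage would, while over each $D_\delta$ the preimage degrees total only $d$ and $f$ has only $d-1$ branch points off $\omega$; combining these constraints with $\lambda\ge 1$ and the strong connectedness of $G$, one argues that if $G$ had no degree-one cycle then too much weight would be dissipated at the higher-degree edges for the leading eigenvalue to reach $1$. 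The polynomial hypothesis is essential exactly at this step --- for general branched covers of $S^2$ the conclusion fails. (An alternative, more analytic route to the same conclusion: iterate Thurston's pullback map on the Teichm\"uller space of $(S^2,P_f)$; if $f$ is obstructed the orbit leaves every compact set, so a finite set of simple closed curves is pinched, and the Riemann--Hurwitz and degree bounds above, reread as estimates for extremal length under pullback, force the pinched curves to form a Levy cycle.)
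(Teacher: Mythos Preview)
The paper does not prove this theorem; it quotes it from Levy's thesis \cite{Levy} and uses it as a black box. So there is no ``paper's own proof'' to compare your proposal against, and I will assess your argument on its own merits.

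Your treatment of the implication ``Levy cycle $\Rightarrow$ obstructed'' is correct and standard: the permutation submatrix forces the leading eigenvalue to be at least $1$.

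For the converse, your setup is right. Passing to an irreducible sub-multicurve is legitimate; the disk $D_\gamma$ avoiding $\omega$ is well defined because $f^{-1}(\omega)=\{\omega\}$; and the Riemann--Hurwitz inequality $|P_f\cap D_\delta|-1\le k\,(|P_f\cap D_{\delta'}|-1)$ is exactly the tool Levy uses. Your reduction to ``the irreducible transition graph contains a directed cycle all of whose edges have degree one'' is also the correct formulation of what remains.

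The gap is that you do not prove this last step. You call it ``the heart of the matter'' and sketch a dissipation-of-weight heuristic, but the sketch is not an argument: you never specify which linear functional or potential you are tracking, nor how the bound $\sum_{\gamma'} \deg(f|_{\gamma'})=d$ over preimages of a fixed disk combines with $\lambda\ge 1$ to yield a contradiction. The alternative analytic route via Thurston iteration is likewise only gestured at. To close the gap you should make the weight argument explicit: for instance, set $m_\gamma=|P_f\cap D_\gamma|-1\ge 1$, observe from your Riemann--Hurwitz inequality that each entry of the Thurston matrix satisfies $A_{\delta\delta'}\le \bigl(\text{number of components of }f^{-1}(\delta')\text{ homotopic to }\delta\bigr)\cdot m_{\delta'}/m_\delta$ only when all those components have degree one, and then show that the vector $(m_\gamma)$ gives a strict super-eigenvector for $A^\top$ unless a degree-one cycle is present. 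Alternatively, follow Levy's original route via arcs to $\omega$: since $f$ is a topological polynomial one can use a spider (a tree of arcs from $\omega$ to $P_f$), and minimal intersection numbers with the spider play the role of your weights; this is also the mechanism behind Lemma~\ref{lem:Levyedge} in the present paper. Either way, the missing piece is a concrete monotone quantity, not just the outline of one.
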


\begin{thm*}[Levy] Let $\Gamma$ be an abstract polynomial portrait
such that every critical vertex is periodic.  Then every Thurston map
realizing $\Gamma$ is unobstructed.
\end{thm*}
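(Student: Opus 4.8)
The plan is to fix a Thurston map $f$ with $\Gamma_f\simeq\Gamma$ and show it is unobstructed by appealing to Levy's theorem that a topological polynomial is obstructed if and only if it admits a Levy cycle; thus the task reduces to ruling out Levy cycles. Since $\Gamma$ is a polynomial portrait it has a fixed vertex of full local degree, so $f$ has a fully ramified fixed point $\omega$; hence $f$ is a topological polynomial. I would also record at the outset that, by hypothesis, every critical point of $f$ is periodic, hence postcritical, so $C_f\subseteq P_f$ (and in particular $\omega\in P_f$); this is the only way the hypothesis enters, though it enters twice.

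Suppose then that $f$ admits a Levy cycle $\{\delta_0,\dots,\delta_{n-1},\delta_n=\delta_0\}$, and for each $i\in\bZ/n$ fix a component $\delta_i'$ of $f^{-1}(\delta_i)$ that is homotopic to $\delta_{i-1}$ rel $P_f$ and satisfies $\deg(f|_{\delta_i'})=1$. For an essential nonperipheral simple closed curve $\gamma$ in $S^2\setminus P_f$, let $D(\gamma)$ be the component of $S^2\setminus\gamma$ not containing $\omega$; it is a disk meeting $P_f$ in at least two points. Set $D_i=D(\delta_i)$ and $D_i'=D(\delta_i')$. The geometric core of the argument, which I would establish first, is that $f$ restricts to a homeomorphism $D_i'\to D_i$: the set $f(D_i')$ is open and connected with $\partial f(D_i')\subseteq f(\delta_i')=\delta_i$, and it omits $\omega$ because $f^{-1}(\omega)=\{\omega\}$ and $\omega\notin D_i'$, so $f(D_i')=D_i$; then $f|_{\overline{D_i'}}$ is a proper branched covering of the closed disk $\overline{D_i}$ whose degree equals that of its restriction to the boundary circle, namely $1$, hence a homeomorphism.

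From this I would extract two consequences. First, a homeomorphism has no critical points in its domain, so $D_i'\cap C_f=\varnothing$; since $\delta_i'$ is homotopic to $\delta_{i-1}$ rel $P_f$ and $C_f\subseteq P_f$, the disks $D_i'$ and $D_{i-1}$ enclose the same points of $P_f$, so $D_{i-1}\cap C_f=\varnothing$ too, and letting $i$ run over $\bZ/n$ gives $D_j\cap C_f=\varnothing$ for every $j$. Second, $f$ maps $D_{i-1}\cap P_f=D_i'\cap P_f$ injectively into $D_i\cap P_f$, so comparing cardinalities once around the cyclically ordered family forces all the $D_i\cap P_f$ to have the same size and hence each of these maps to be a bijection. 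Consequently, for $p\in D_0\cap P_f$ one has $f^j(p)\in D_{j\bmod n}$ for all $j\ge 0$, and $f^n$ permutes the finite set $D_0\cap P_f$, so $p$ is periodic for $f$.

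To conclude, choose $p\in D_0\cap P_f$, which is possible since $|D_0\cap P_f|\ge 2$. As $p$ is postcritical, $p=f^k(c)$ for some critical point $c$ and some $k\ge 1$; by hypothesis $c$ is periodic, and since $p$ is periodic and lies on the forward orbit of $c$, the critical point $c$ lies on the cycle of $p$, say $c=f^j(p)$. Then $c\in D_{j\bmod n}$, contradicting $D_{j\bmod n}\cap C_f=\varnothing$. Hence no Levy cycle exists and $f$ is unobstructed. The step I expect to demand the most care is the homeomorphism claim $f|_{D_i'}\colon D_i'\to D_i$, and within it the identity $f(D_i')=D_i$, which is exactly where the topological-polynomial structure---full ramification of the fixed point $\omega$---is used.
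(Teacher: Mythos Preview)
Your argument is correct. The paper itself does not prove this statement---it is cited from Levy's thesis---but the closely related proof of Theorem~\ref{thm:unobstructed} indicates the standard approach, and it differs from yours in one respect worth noting. There one first invokes, as a black box from \cite[Section~10.3]{Hub}, the reduction ``Levy cycle $\Rightarrow$ degenerate Levy cycle''; the homeomorphism $f|\co D_{i-1}'\to D_i$ is then part of the \emph{definition} of degenerate, and the endgame (no critical points in any $D_j$, equal cardinalities forcing bijections around the cycle, periodicity contradiction) proceeds exactly as you wrote it. You instead establish the homeomorphism property directly for an arbitrary Levy cycle, using only that $f^{-1}(\omega)=\{\omega\}$ to force $f(D_i')=D_i$ and then matching the boundary degree with the total degree of the proper branched cover $\overline{D_i'}\to\overline{D_i}$. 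This makes your proof self-contained---no appeal to the separate degenerate-Levy-cycle lemma---at the price of the short topological verification you rightly flag as the delicate step; it also shows, in passing, that for a topological polynomial every Levy cycle already enjoys the key property of a degenerate one once the disks are taken on the side away from $\omega$.
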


The proof of the latter can be strengthened to give the following result.  
See, for example, Hubbard \cite[Theorem 10.3.9]{Hub}.

\begin{thm*}[Levy-Berstein] Suppose $\Gamma$ is an abstract polynomial
portrait such that each cycle contains a critical vertex.  Then every
Thurston map realizing $\Gamma$ is unobstructed.
\end{thm*}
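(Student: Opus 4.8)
The plan is to argue by contradiction: suppose $f$ is a Thurston map realizing $\Gamma$ that is obstructed. By Levy's theorem for topological polynomials, $f$ then admits a Levy cycle $\{\delta_0,\ldots,\delta_{n-1},\delta_n=\delta_0\}$. Fix such a Levy cycle and look at how it interacts with the postcritical set $P_f$, which we identify with the non-$\omega$ vertices of $\Gamma$ (recall the portrait records $C_f\cup P_f$, and $\omega$ is the fully ramified fixed point). The key combinatorial object attached to each curve $\delta_i$ is the partition of $P_f$ induced by the two complementary disks of $\delta_i$ in $S^2$; since one disk contains $\omega$, I will speak of the \emph{inner disk} $D_i$ (the one not containing $\omega$) and the finite set $A_i := D_i \cap P_f$. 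Essentiality and nonperipherality say $2 \le |A_i| \le |P_f|-1$.

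The heart of the argument is a counting/monotonicity estimate along the cycle. For each $i$, the Levy condition gives a component $\widetilde\delta_{i}$ of $f^{-1}(\delta_i)$ that is homotopic to $\delta_{i-1}$ (rel $P_f$) and maps to $\delta_i$ by degree $1$. The inner disk $\widetilde D$ bounded by $\widetilde\delta_i$ on the $\omega$-free side maps \emph{homeomorphically} onto $D_i$ (degree $1$, and it cannot contain $\omega$ since $\omega$ is a fixed point on the outer side of $\delta_i$, hence on the outer side of any degree-one preimage). Because $\widetilde\delta_i$ is homotopic to $\delta_{i-1}$, we get $A_{i-1} = \widetilde D \cap P_f$. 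Now $f$ maps $P_f$ into $P_f$, and restricted to $P_f \cap \widetilde D$ it is injective (the map is a homeomorphism there) with image $A_i$. So $|A_{i-1}| = |A_i \cap f(P_f)|$. I will combine this with the hypothesis that every cycle of $\Gamma$ contains a critical vertex: the sets $A_i$ cycle around, so after $n$ steps we return, and the composite $f^{\circ n}$ restricted to the relevant part of $P_f$ is a bijection $A_0 \to A_0$ — i.e.\ a union of periodic cycles of $f$ entirely contained in $\bigcup_i A_i$. But a point of $P_f$ that is periodic and whose whole cycle stays inside the nested family of inner disks $D_i$ must, because each $D_i$ maps homeomorphically from $D_{i-1}$, be a point at which $f^{\circ n}$ is a local homeomorphism along its entire cycle; hence no vertex in that cycle is critical, contradicting the hypothesis that each cycle of $\Gamma$ contains a critical vertex.

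More carefully, here is the step I expect to be the main obstacle and how I would handle it. The subtlety is that a Levy cycle need not be "nested": the disks $D_i$ can be in arbitrary position relative to one another, so one cannot literally intersect them. The fix is to work not with the disks themselves but with the \emph{subsets} $A_i \subseteq P_f$ together with the degree-one lifting data, and to extract from the Levy cycle a genuine periodic cycle of $f$ inside $P_f$ along which the local degree is $1$ at every point. Concretely, I would iterate: a point $p_i \in A_i$ has a unique preimage $p_{i-1}$ under $f$ lying in $\widetilde D$ (by injectivity on $\widetilde D \cap P_f$), and $p_{i-1}\in A_{i-1}$; chasing this backward around the cycle $n$ times lands in $A_0$ again, and since $P_f$ is finite some iterate of this "backward along the Levy cycle" map is periodic. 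The point on that periodic orbit has the property that, traversing its $f$-orbit once around, each step is the homeomorphic restriction $\widetilde D \to D_i$, so $\deg_f = 1$ at every point of the orbit. That orbit is an honest cycle in $\Gamma$ with no critical vertex — the contradiction. The bookkeeping needed to make "some iterate is periodic" and "the orbit in $S^2$ really is the orbit in $\Gamma$" precise is routine but must be done carefully; everything else (the homeomorphic lifting of inner disks, the injectivity of $f$ on $P_f\cap\widetilde D$) is immediate from the definition of a Levy cycle and the fact that $\omega$ lies on the outer side.

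Finally, for the reference to Hubbard \cite[Theorem 10.3.9]{Hub}: the statement there is essentially the assertion that an obstructed topological polynomial has a Levy cycle whose curves all separate $P_f$ in a compatible way, and one reads off that the Levy cycle "misses" a critical cycle; our hypothesis that \emph{every} cycle of $\Gamma$ contains a critical vertex is exactly what rules this out. So an alternative (shorter) write-up is simply to invoke Levy's theorem to produce the Levy cycle, then cite the analysis in \cite{Hub} that a Levy cycle forces the existence of an $f$-periodic cycle in $P_f$ of pointwise local degree $1$, contradicting the hypothesis. I would present the self-contained argument above as the main proof and mention the Hubbard reference as the source of the strengthening over Levy's original statement.
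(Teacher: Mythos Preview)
Your overall approach is the same as the paper's (which follows Hubbard): assume an obstruction, pass to a Levy cycle, argue that the postcritical points trapped inside the Levy curves form an $f$-periodic cycle on which $f$ has local degree $1$ everywhere, and conclude that this cycle is a non-attractor, contradicting the hypothesis. The paper sketches exactly this at the start of its proof of Theorem~\ref{thm:unobstructed}.

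There is, however, one step you treat as immediate that requires real justification: the claim that the inner disk $\widetilde D$ bounded by $\widetilde\delta_i$ maps \emph{homeomorphically} onto $D_i$. You justify this only by ``degree $1$'' and ``$\omega\notin\widetilde D$'', and later call the homeomorphic lifting of inner disks ``immediate from the definition of a Levy cycle''. It is not. For a general Thurston map, a degree-$1$ component $\widetilde\delta_i$ of $f^{-1}(\delta_i)$ may bound a disk that contains \emph{other} components of $f^{-1}(\delta_i)$; then $f|_{\widetilde D}$ surjects onto $S^2$, points of $A_{i-1}$ need not land in $A_i$, and your chase of preimages collapses. What saves you here is precisely that $f$ is a topological polynomial: since $\omega$ is fully ramified, $f^{-1}(\omega)=\{\omega\}$, so the component of $f^{-1}(\text{outer disk of }\delta_i)$ containing $\omega$ already has degree $d$ over the outer disk and is therefore all of $f^{-1}(\text{outer disk})$. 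Consequently $f^{-1}(D_i)$ is a disjoint union of closed disks, and the one bounded by $\widetilde\delta_i$ is exactly your $\widetilde D$, mapping to $D_i$ with degree $1$. The paper packages this step by invoking the existence of a \emph{degenerate} Levy cycle (citing \cite[Section~10.3]{Hub}), whose condition (d) is precisely the homeomorphism $D_{i-1}'\to D_i$ you need. Once this is in hand, the remainder of your argument is correct and matches the paper's; your ``more carefully'' paragraph addresses a different and, for this theorem, unnecessary concern (pairwise disjointness of the $D_i$).
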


We will use Levy's first theorem in an essential way in our proof 
of Theorem \ref{main:thm:1}. Indeed, given an abstract portrait $\Gamma$, 
we will construct a topological polynomial $g:S^2\to S^2$ so that 
$\Gamma_g\simeq \Gamma$, and so that $g$ cannot possibly admit a Levy cycle. 
Theorem \ref{main:thm:1} immediately follows.  

 In \cite[Theorem 1.1]{Kel} Kelsey uses self-similar groups to give a
partial converse to the Levy-Berstein theorem.  In the discussion that
follows, an {\em attractor} of an abstract portrait is a cycle that
contains a critical vertex, and a {\em non-attractor} is a cycle that
does not contain a critical vertex.

\begin{thm*}[Kelsey]
Suppose $\Gamma$ is an abstract polynomial portrait, and that $\Gamma$ 
satisfies at least one of the following properties:
\begin{itemize}
\item[(1)] $\Gamma$ contains a cycle\footnote{which is necessarily a 
non-attractor} of length at least two that does not contain any 
critical values;
\item[(2)] $\Gamma$ contains at least two cycles\footnote{which are 
necessarily non-attractors} that do not contain any critical values;
\item[(3)] $\Gamma$ contains at least two non-attractor cycles that have 
length at least two;
\item[(4)] $\Gamma$ contains at least four non-attractor cycles.
\end{itemize}
Then there is an obstructed Thurston map that realizes $\Gamma$.
\end{thm*}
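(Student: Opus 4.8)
The plan is to prove the statement by constructing, for each $\Gamma$ satisfying one of (1)--(4), a topological polynomial $g\co S^2\to S^2$ with $\Gamma_g\simeq\Gamma$ that admits a Levy cycle; by Levy's first theorem $g$ is then obstructed, and $g$ is the desired obstructed Thurston map realizing $\Gamma$.  So the whole difficulty is constructive: to assemble a single degree-$d$ branched cover with the prescribed ramification portrait inside which some curve, together with its pullback, manifestly witnesses the Levy conditions.  In every case I would aim for the simplest such object, a Levy cycle of length one: a curve $\gamma$ one of whose preimage components is homotopic to $\gamma$ and maps to $\gamma$ by degree one.

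\emph{Step 1 (locating the curve).}  I would first show that in each of the four cases the hypothesis produces a forward-invariant set $S$ of vertices of $\Gamma$ that is a union of non-attractor cycles and satisfies $|S|\ge 2$ and $|P\setminus S|\ge 2$, where $P$ denotes the set of postcritical vertices of $\Gamma$.  In case (1) take $S$ to be the given critical-value-free cycle of length at least two; in case (2) take $S$ to be the union of two critical-value-free cycles; in case (3) take $S$ to be either one of the two non-attractor cycles of length at least two; in case (4) take $S$ to be a non-attractor cycle of length at least two if one exists, and otherwise the union of two of the (at least four) non-attractor fixed vertices.  That $|S|\ge 2$ is immediate from the hypothesis in each instance.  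For $|P\setminus S|\ge 2$, note first that the fully ramified fixed vertex $\omega$ lies in $P\setminus S$ (it is critical, hence not in any non-attractor cycle).  In cases (1) and (2), since the cycles making up $S$ carry no critical values, Riemann--Hurwitz forces a critical vertex other than $\omega$, and its critical value is a postcritical vertex distinct from $\omega$ and not lying in $S$; in cases (3) and (4), the non-attractor cycle(s) not used in forming $S$ contribute a postcritical vertex outside $S$.  Either way $|P\setminus S|\ge 2$.  (The occurrence of the numbers ``two'' and ``four'' and of the length conditions in (1)--(4) is tailored to make these two inequalities hold.)

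\emph{Step 2 (building $g$).}  Place the vertices of $\Gamma$ on $S^2$ with $S$ inside a small disk $D$ bounded by a simple closed curve $\gamma$, and with $\omega$ and the remaining vertices outside $D$.  By adapting the explicit construction used to prove Theorem \ref{main:thm:1} (or by invoking the realizability of polynomial branch data, Proposition 5.2 of \cite{EKS}), assemble a degree-$d$ topological polynomial $g$ with $\Gamma_g\simeq\Gamma$ whose behaviour over $D$ is arranged as follows: one component $D'$ of $g^{-1}(D)$ is a disk mapping homeomorphically onto $D$, with $D'\cap P = S$ --- the ``diagonal sheet'' on which $g$ restricts to the cyclic shift of $S$ onto itself --- while the critical points lying over vertices of $S$ (which occur exactly when some cycle of $S$ contains a critical value) and all other postcritical preimages of vertices of $S$ are routed into the remaining components of $g^{-1}(D)$.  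Then $\gamma=\partial D$ and $\partial D'$ bound disks (neither containing $\omega$) enclosing the same subset $S$ of $P$, so $\partial D'$ is homotopic to $\gamma$ relative to $P$; and $\gamma$ is essential and nonperipheral because $|S|\ge 2$ and $|P\setminus S|\ge 2$.  Hence $\gamma$ is a Levy fixed curve for $g$, and by Levy's theorem $g$ is obstructed, completing the argument.

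The main obstacle is Step 2: one must exhibit a \emph{single} branched cover that simultaneously realizes $\Gamma$ on the nose --- all local degrees and all of the prescribed identifications on $C_g\cup P_g$ --- and carries the degree-one lift $D'$ of $D$ that is homotopic to $\gamma$.  When $S$ contains a critical value, having a leftover degree-one component $D'$ of $g^{-1}(D)$ requires the critical points sitting over $D$ to be appropriately linked among the other components, and one must check that this choice is compatible with the global Riemann--Hurwitz count and with the combinatorics of $\Gamma$ away from $D$.  Carrying this out uniformly across the four hypotheses --- and verifying in each that the inequalities $|S|\ge 2$ and $|P\setminus S|\ge 2$ genuinely hold --- is where the work lies; the reduction to Levy cycles and the location of the curve $\gamma$ are comparatively routine.
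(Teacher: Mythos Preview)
The paper does not actually prove this theorem: it is quoted from Kelsey~\cite[Theorem~1.1]{Kel}, whose argument goes through self-similar groups (iterated monodromy groups), a route quite different from your sketch.  So there is no ``paper's own proof'' to compare against directly.

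That said, your outline is essentially the strategy the paper uses to prove its own, stronger Theorem~\ref{thm:obstructed}, which subsumes Kelsey's result.  There the authors do exactly what you propose in Step~2: they build the branched cover by hand so that a chosen disk $D$ has a degree-one lift $\widetilde D$ meeting $P_f$ in the same forward-invariant set, whence $\partial D$ is a (degenerate) Levy fixed curve.  The device that replaces your appeal to ``adapt Theorem~\ref{main:thm:1}'' or ``invoke polynomial branch data'' is the \emph{rose map} construction and Lemma~\ref{lem:cmbllemma}.  That lemma produces, for any abstract polynomial portrait and any finite critical value $v$, a branched cover $g\co S_1^2\to S_2^2$ realizing the branch data together with a dual-graph region $C_v$ on which $g$ is injective and whose only boundary critical points carry labels $v$ and $\infty$; one then places $D$ inside $g(C_v)$, lifts it to $\widetilde D\subset C_v$, and precomposes $g$ with a homeomorphism $h\co S_2^2\to S_1^2$ sending $D$ to $\widetilde D$ and realizing the portrait.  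Your case~(4) with $S$ consisting of two fixed vertices is handled in the paper by first swapping them into a $2$-cycle, applying the length-$\ge 2$ construction, and then postcomposing with a transposition supported in $D$.

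One refinement your Step~1 needs: in case~(3) you cannot take $S$ to be ``either one'' of the two cycles; you must pick one that does not contain \emph{all} finite critical values (at least one of the two has this property, since the cycles are disjoint).  This is exactly hypothesis~(iii) of Theorem~\ref{thm:obstructed}, and it is what guarantees a critical value $v\notin S$ so that Lemma~\ref{lem:cmbllemma} yields the region $C_v$ over which the degree-one sheet lives.  With that adjustment, the gap you identify in Step~2 is precisely the content of Section~\ref{sec:rose}.
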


In Theorem \ref{thm:unobstructed}, we show that certain abstract polynomial
portraits have only unobstructed representatives, and in Theorem
\ref{thm:obstructed}, we show that certain abstract polynomial portraits have
obstructed representatives.  We need a definition to state
Theorems~\ref{thm:unobstructed} and \ref{thm:obstructed}.  Let
$\Gamma$ be an abstract polynomial portrait, and let $v$ be a vertex
of $\Gamma$.  Then $v$ is the source vertex of exactly one edge of
$\Gamma$.  We let $\tau(v)$ denote the target vertex of this edge.

\begin{thm}\label{thm:unobstructed} Suppose $\Gamma$ is an abstract
polynomial portrait that has at least four postcritical vertices and
satisfies one of the following properties.
\begin{itemize}
\item[(i)] $\Gamma$ has a single non-attractor cycle, and it has length one.
\item[(ii)] Every finite postcritical vertex of $\Gamma$ is in a single 
non-attractor cycle, this cycle has length $p^k$ for some prime number $p$ 
and some positive integer $k$, and the finite postcritical vertices can be 
enumerated as $\{v_i: 0\leq i < p^k\}$ such that $\tau(v_i) = v_{i+1}$ 
(mod $p^k$) for every $i\in \{0,\dots,p^k -1\}$, and
if $v_j$ is a critical value then $j$ is a multiple of $p^{k-1}$.
\end{itemize}
Then every Thurston map with portrait isomorphic to $\Gamma$ is
unobsructed.
\end{thm}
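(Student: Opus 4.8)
The plan is to show that no Thurston map realizing $\Gamma$ carries a Levy cycle; since every such map is a topological polynomial, Levy's first theorem then gives that it is unobstructed. So suppose $f$ realizes $\Gamma$ and that $\{\delta_0,\dots,\delta_{n-1}\}$ (indices read mod $n$) is a Levy cycle, with $\gamma_i\subseteq f^{-1}(\delta_i)$ homotopic rel $P_f$ to $\delta_{i-1}$ and $f|_{\gamma_i}$ of degree one. Let $\omega$ be the fully ramified fixed point, $V=P_f\setminus\{\omega\}$, and for a simple closed curve $\delta$ in $S^2\setminus P_f$ let $D(\delta)$ be the complementary disk not containing $\omega$ and $E(\delta)=D(\delta)\cap V$; set $E_i=E(\delta_i)$. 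The first step is to record the structural consequences of the Levy condition. Since $f$ is postcritically finite it restricts to a covering of $S^2\setminus f^{-1}(P_f)$, so homotopic curves have homotopic preimage families and $E(\gamma_i)=E_{i-1}$. As $f^{-1}(\omega)=\{\omega\}$ and $\omega\notin D(\gamma_i)$, the image $f(D(\gamma_i))$ misses $\omega$ and hence equals $D(\delta_i)$; together with $\deg(f|_{\gamma_i})=1$ this makes $f\colon\overline{D(\gamma_i)}\to\overline{D(\delta_i)}$ a homeomorphism. Therefore $D(\gamma_i)$ contains no critical point, $f$ carries $E_{i-1}=E(\gamma_i)$ injectively into $E_i$, and going once around the cycle forces each $f\colon E_{i-1}\to E_i$ to be a bijection; consequently all $|E_i|$ equal a common $m\ge 2$ (essentiality of the $\delta_i$), the set $\bigcup_i E_i$ is forward invariant and consists of periodic points, and $\bigcup_i E_i\subseteq\bigcup_i D(\gamma_i)$ contains no critical point. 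Finally the $\delta_i$ are disjoint and pairwise nonhomotopic, so the disks $D(\delta_i)$ are pairwise disjoint or nested; a nesting $D(\delta_i)\subseteq D(\delta_j)$ would give $E_i\subseteq E_j$ with $|E_i|=|E_j|$, hence $\delta_i\sim\delta_j$. Thus $E_0,\dots,E_{n-1}$ are pairwise disjoint subsets of $V$ of common size $m\ge 2$.

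Case (i) is then immediate: $\bigcup_i E_i$ is a nonempty, forward invariant set of periodic points containing no critical vertex, hence a union of non-attractor cycles of $\Gamma$; by hypothesis there is exactly one such cycle and it has length one, so $\bigcup_i E_i$ has at most one element, contradicting $m\ge 2$.

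For case (ii) I would identify $V$ with $\mathbb Z/p^k$ so that $f|_V$ is $x\mapsto x+1$ and $E_i=E_0+i$. Let $n$ be the exact shift-period of $E_0$; since $E_0$ is a proper nonempty subset, $n>1$, and $n\mid p^k$, so $n=p^j$ with $1\le j\le k$. Then $E_0$ is invariant under adding $p^j$, hence a union of cosets of the order-$p^{k-j}$ subgroup $\langle p^j\rangle$; since the $p^j$ translates $E_0,E_0+1,\dots,E_0+p^j-1$ are pairwise disjoint in $\mathbb Z/p^k$, a cardinality count forces $E_0$ to be a single such coset, so $m=p^{k-j}$. If $j=k$ this gives $m=1$, impossible; hence $j\le k-1$ and $\langle p^{k-1}\rangle\subseteq\langle p^j\rangle$. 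By hypothesis every critical value of $f$ lying in $V$ sits at a position in $\langle p^{k-1}\rangle$, so, as the cosets of $\langle p^j\rangle$ partition $V$, there is a unique index $i^*$ with $E_{i^*}=\langle p^j\rangle$, and $D(\delta_{i^*})$ then contains every critical value of $f$ except $\omega$. To finish, I would apply Riemann--Hurwitz to $f^{-1}\bigl(\overline{D(\delta_{i^*})}\bigr)$. Because $\omega$ is the unique, fully ramified preimage of $\omega$, the preimage of the open disk $S^2\setminus\overline{D(\delta_{i^*})}$ is connected and carries all of degree $d$; hence $f^{-1}\bigl(\overline{D(\delta_{i^*})}\bigr)$ is a disjoint union of closed disks $X_1,\dots,X_r$ with $\sum_t\deg(f|_{X_t})=d$, and since each $X_t$ is a disk mapping to a disk unramified over the boundary, $X_t$ carries exactly $\deg(f|_{X_t})-1$ units of ramification. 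One component is $\overline{D(\gamma_{i^*})}$, of degree one. Summing, the ramification of $f$ over $D(\delta_{i^*})$ is $d-r$; on the other hand it equals the ramification over the critical values of $f$ in $D(\delta_{i^*})$, which are all critical values but $\omega$, of total ramification $(2d-2)-(d-1)=d-1$. Hence $r=1$, so the lone component $\overline{D(\gamma_{i^*})}$ has degree $d$; but its degree is one, forcing $d=1$, a contradiction.

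The main obstacle is precisely this case (ii) endgame: recognizing that the disjointness of the enclosed sets (itself forced by the curves of a Levy cycle being disjoint and nonisotopic) combines with the shift dynamics to pin the $E_i$ down as cosets, that the prime-power hypothesis on $\Gamma$ is exactly what makes one of those cosets absorb every finite critical value, and that pulling back the curve enclosing that coset is then incompatible with $\deg f\ge 2$. A secondary point needing care is the covering-space input that upgrades ``homotopic to a degree-one preimage component'' to the honest homeomorphism $\overline{D(\gamma_i)}\to\overline{D(\delta_i)}$, which underlies every structural fact used above.
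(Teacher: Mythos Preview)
Your proof is correct and follows essentially the same strategy as the paper's: both show that the postcritical points enclosed by the Levy disks lie in non-attractor cycles (settling case (i)), and in case (ii) both deduce that one Levy disk contains every finite critical value and then obtain a contradiction from the fact that the preimage of the complementary disk about $\omega$ is a single disk of full degree $d$. The only notable difference is packaging: the paper quotes the existence of a \emph{degenerate} Levy cycle from Hubbard to get disjointness of the disks and the bijections $E_{i-1}\to E_i$, whereas you derive these directly from the Levy condition and the full ramification at $\omega$, and you phrase the endgame as a Riemann--Hurwitz count rather than the paper's covering-space statement---but these are the same argument in slightly different words.
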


The hypothesis that there are at least four postcritical vertices is
not restrictive, since by Thurston's characterization theorem a
Thurston map with fewer than four postcritical points is unobstructed.
The proof is along the lines of the argument for the Levy-Berstein
Theorem.  If the abstract portrait can be realized by an obstructed
Thurston map, then by Levy \cite{Levy} there must be a Levy cycle.
This implies that, in the teminology of Hubbard \cite{Hub}, there must
be a {\em degenerate Levy cycle}.  One then shows that this is
impossible if the portrait satisfies (i) or (ii).  The proof is given
in Section \ref{sec:unobstructed}.  Part (if not all) of case (i) of
Theorem \ref{thm:unobstructed} was previously known.  The case of a
single non-attractor cycle of length one and no other finite cycles
was observed by Kelsey \cite[p. 52]{Kel}.

\begin{thm}\label{thm:obstructed} Suppose $\Gamma$ is an abstract
polynomial portrait that has at least four postcritical vertices and
satisfies one of the following properties.
\begin{itemize}
\item[(i)] Every finite postcritical vertex of $\Gamma$ is in a single 
non-attractor cycle, this cycle has length $p^k$ for some prime number 
$p$ and some positive integer $k$,
the finite vertices can be enumerated as $\{v_i: 0\le i < p^k\}$ such
that $v_0$ is a critical value, $\tau(v_i) = v_{i+1}$ mod $p^k$ for every
$i\in \{0,\dots,p^k -1\}$, and there is a critical value $v_j$ such that
$j$ is not a multiple of $p^{k-1}$.
\item[(ii)] Every finite postcritical vertex of $\Gamma$ is in a single 
non-attractor cycle of length at least two, 
and this cycle does not have prime-power length.
\item[(iii)]$ \Gamma$ contains a non-attractor cycle of length at least 
two that does not
contain all of the finite critical values.
\item[(iv)] $\Gamma$ has at least two non-attractor cycles of length one.
\end{itemize}
Then there exists an obstructed Thurston map whose   portrait is 
isomorphic to $\Gamma$.\end{thm}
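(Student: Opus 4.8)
The plan is to prove Theorem~\ref{thm:obstructed} by a single mechanism for all four cases, using Levy's first theorem: it suffices, in each case, to build a topological polynomial $g\colon S^2\to S^2$ with $\Gamma_g\simeq\Gamma$ that carries a Levy cycle. I would produce a \emph{degenerate} one --- a cyclically indexed family of pairwise disjoint closed topological disks $B_0,\dots,B_{m-1}\subseteq S^2$, none containing the fully ramified fixed point $\omega$ or any critical point of $g$, with $g$ carrying $B_{j-1}$ homeomorphically onto $B_j$ for all $j$ (indices mod $m$). Then, for each $j$, the curve $\partial B_{j-1}$ is a component of $g^{-1}(\partial B_j)$ that maps by degree one and is (trivially) homotopic to $\partial B_{j-1}$, so $\{\partial B_0,\dots,\partial B_{m-1}\}$ satisfies the defining condition of a Levy cycle, provided the curves are essential and nonperipheral in $S^2\setminus P_g$ (and pairwise non-homotopic, which is automatic when $m\ge 2$ because the enclosed vertex sets differ). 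Writing $A_j$ for the set of finite postcritical vertices enclosed by $B_j$, the homeomorphisms force $\tau(A_{j-1})=A_j$; hence the $A_j$ must form a \emph{$\tau$-block system} --- a partition of a set $S$ of finite postcritical vertices, with $\tau|_S$ a permutation, into $m$ equinumerous blocks cyclically permuted by $\tau$ --- and, since a Levy cycle meets no attractor, $S$ may contain no critical vertex.

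For the converse --- realizing a prescribed block system by a topological polynomial with portrait $\Gamma$ --- the argument is degree bookkeeping. Over each $B_j$ the preimage must consist of $B_{j-1}$ (degree $1$), the disks carrying the critical vertices that $\Gamma$ sends into $A_j$ (total degree $1+R_j$, where $R_j$ is the sum of $\deg(c)-1$ over those vertices $c$), and $d-2-R_j$ further degree-one disks, $d$ being the degree of $\Gamma$. So the construction succeeds exactly when
\[
R_j\le d-2\quad\text{for every }j,
\]
and since the finite critical vertices carry total ramification $\sum_j R_j=d-1$, this holds as soon as at least two of the blocks $A_j$ contain a critical value.

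It then remains, in each case, to choose a suitable block system and verify this inequality. In (iii) I would take $m=1$ with $B_0$ enclosing exactly the vertices of the given length-$\ell$ non-attractor cycle $C$ (essential since $\ell\ge 2$), and place a finite critical value $v^\ast\notin C$ --- one exists by hypothesis --- outside $B_0$; then $\partial B_0$ is nonperipheral ($\omega$ and $v^\ast$ are outside), and the ramification over $v^\ast$ is excluded from $R_0$, so $R_0\le d-2$. In (iv) I would again take $m=1$, with $B_0$ enclosing the two non-attractor fixed vertices $u,w$: here the hypothesis of at least four postcritical vertices is used crucially, since if every finite critical vertex mapped into $\{u,w\}$ the postcritical set would be $\{u,w,\omega\}$; hence some critical vertex maps elsewhere, whence $R_0\le d-2$, and a finite postcritical vertex lies outside $B_0$, making $\partial B_0$ nonperipheral. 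In (i) and (ii) the set $S$ is necessarily the whole cycle $\{v_0,\dots,v_{\ell-1}\}\cong\mathbb Z/\ell\mathbb Z$ with $\tau$ the shift, and the block systems are precisely the coset partitions of subgroups $H\le\mathbb Z/\ell\mathbb Z$ with $2\le|H|\le\ell/2$; I would choose $H$ so that two critical values lie in distinct cosets. For (i), with $\ell=p^k$ and a critical value $v_j$ satisfying $p^{k-1}\nmid j$, take $|H|=p^{k-i}$ where $i=\nu_p(j)+1\in\{1,\dots,k-1\}$ ($\nu_p$ the $p$-adic valuation): then the critical value $v_0$ lies in the identity coset $H$ while $v_j$ does not. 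For (ii), $\ell$ is not a prime power; there are at least two critical values (a single one is impossible, since its full preimage would have to contain every finite critical vertex together with the cycle-predecessor vertex, exceeding degree $d$), and a short divisibility argument --- $\operatorname{lcm}_{q\mid\ell}(\ell/q)=\ell$ when $\ell$ has two distinct prime factors --- yields a prime $q\mid\ell$ whose order-$q$ subgroup separates two critical values, with $2\le q$ and $2\le\ell/q$. In each case two blocks then contain a critical value, so $R_j\le d-2$ for all $j$.

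In every case the resulting $g$ is a topological polynomial with $\Gamma_g\simeq\Gamma$ admitting the degenerate Levy cycle $\{\partial B_0,\dots,\partial B_{m-1}\}$, hence obstructed by Levy's first theorem. I expect the main obstacle to be the construction-and-gluing step: one must patch the local branched covers over the disks $B_j$ (each with its distinguished homeomorphic sheet $B_{j-1}$) to a branched cover of $S^2\setminus\bigcup_j B_j$ that reproduces every edge and every local degree of $\Gamma$, matching the boundary parametrizations along each $\partial B_j$. This is where the explicit construction developed for Theorem~\ref{main:thm:1} must be reused and modified, and the inequality $R_j\le d-2$ verified above from the case hypotheses is exactly what leaves enough room in the degree count over each disk for the patching to go through.
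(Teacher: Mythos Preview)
Your strategy coincides with the paper's: in every case one constructs a degenerate Levy cycle, and the combinatorial heart is exactly the block-system analysis you give --- the paper's choice of the divisor $m$ in cases (i)--(ii), of a single disk around the cycle in (iii), and of the two fixed points in (iv) match yours. The ramification bound you isolate, $R_j\le d-2$, is the right condition, and your verifications in the four cases are correct (with the small caveat that $\sum_j R_j\le d-1$ in general, with equality only in cases (i)--(ii); this does not affect the argument). One organizational difference: the paper handles (iv) by first realizing the modified portrait $\Gamma'$ in which $u,w$ form a $2$-cycle (so (iii) applies), then postcomposing with a homeomorphism supported in the Levy disk that swaps $u$ and $w$; your direct treatment is a bit cleaner.

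Where the two diverge is the construction step you flag at the end. The paper does \emph{not} adapt the finite-subdivision-rule machinery of Theorem~\ref{main:thm:1}; that construction is tuned to \emph{prevent} Levy cycles (via Lemma~\ref{lem:Levyedge}), and reworking it to \emph{produce} one would be awkward. Instead the paper introduces a separate device, \emph{rose maps} (Section~\ref{sec:rose} and Lemma~\ref{lem:cmbllemma}): one builds a branched cover $g\colon S_1^2\to S_2^2$ by specifying the pullback $R_1$ of a spine rose $R_2\subset S_2^2$, and Lemma~\ref{lem:cmbllemma} arranges, by an explicit chain-of-polygons construction, that the dual graph $R_1^*$ has complementary regions whose boundaries meet exactly one finite critical point (with prescribed label $u$ or $v$) together with the $\infty$-vertex. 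Such a region is precisely a domain over which a disk in $S_2^2$ lifts homeomorphically, and the prescribed label controls which critical value one excludes from the lift --- this is how your inequality $R_j\le d-2$ is realized constructively. The obstructed topological polynomial is then $f=g\circ h$ for a homeomorphism $h\colon S_2^2\to S_1^2$ carrying the Levy disks to their designated lifts. So your outline is sound, but the gluing you anticipate is carried out by a different and more direct mechanism than the one you propose to reuse.
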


The proof of Theorem \ref{thm:obstructed} is constructive and relies
on a combinatorial lemma, Lemma \ref{lem:cmbllemma}.  Given an
abstract polynomial portrait $\Gamma$ that satisfies any of conditions
(i)-(iv) of the theorem, we describe a construction of an obstructed
Thurston map with portrait isomorphic to $\Gamma$. We introduce {\em
rose maps} and prove the lemma in Section \ref{sec:rose}. We then
prove the theorem in Section \ref{sec:obstructed}.

Combining Theorem \ref{thm:unobstructed}, Theorem
\ref{thm:obstructed}, and the Levy-Berstein Theorem, we classify the
abstract polynomial portraits that are completely unobstructed.  We
summarize this result in the following theorem, which we prove in
Section \ref{sec:classification}.

\begin{thm}\label{thm:classification} Suppose $\Gamma$ is an abstract
polynomial portrait. Then every Thurston map with portrait isomorphic
to $\Gamma$ is unobstructed if and only if $\Gamma$ satisfies at least
one of the following conditions.
\begin{itemize}
\item[(i)]
$\Gamma$ has at most three postcritical vertices.
\item[(ii)]
Every cycle of $\Gamma$ is an attractor.
\item[(iii)]
$\Gamma$ has a single non-attractor cycle, and it has length one.
\item[(iv)]
Every finite postcritical vertex of $\Gamma$ is in a single non-attractor 
cycle, this cycle has length $p^k$ for some prime number $p$ and some 
positive integer $k$, the finite postcritical vertices can be enumerated 
as 
\[
\{v_i: 0\le i < p^k\} \text{ such
that }\tau(v_i) = v_{i+1}\;\mathrm{mod}\; p^k \text{ for every }i\in \{0,\dots,p^k -1\},
\]
and
if $v_j$ is a critical value, then $j$ is a multiple of $p^{k-1}$.
\end{itemize}
\end{thm}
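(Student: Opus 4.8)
The plan is to deduce Theorem~\ref{thm:classification} by assembling the three ingredients already in hand: the Levy--Berstein Theorem, Theorem~\ref{thm:unobstructed}, and Theorem~\ref{thm:obstructed}. The statement is a biconditional, so I would organize the argument into the two implications, treating the ``if'' direction first since it is essentially a bookkeeping exercise.

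For the ``if'' direction, suppose $\Gamma$ satisfies one of (i)--(iv). If (i) holds, then a Thurston map realizing $\Gamma$ has fewer than four postcritical points, and Thurston's characterization theorem (applied, e.g., via the remark following Theorem~\ref{thm:unobstructed}) guarantees it is unobstructed. If (ii) holds, then every cycle of $\Gamma$ contains a critical vertex, so the Levy--Berstein Theorem applies directly. Conditions (iii) and (iv) are exactly hypotheses (i) and (ii) of Theorem~\ref{thm:unobstructed}, provided we also know $\Gamma$ has at least four postcritical vertices; if it does not, we are back in case (i). So in every case every Thurston map realizing $\Gamma$ is unobstructed, and the ``if'' direction is complete.

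For the ``only if'' direction, I would argue by contraposition: assume $\Gamma$ satisfies none of (i)--(iv), and produce an obstructed Thurston map realizing $\Gamma$. Since (i) fails, $\Gamma$ has at least four postcritical vertices. Since (ii) fails, $\Gamma$ has at least one non-attractor cycle. The bulk of the work is a case analysis on the structure of the non-attractor cycles, matching each configuration to one of the four constructive hypotheses of Theorem~\ref{thm:obstructed}. The natural split is on the number and lengths of the non-attractor cycles: if there are two or more non-attractor cycles of length one, Theorem~\ref{thm:obstructed}(iv) applies; if some non-attractor cycle of length $\geq 2$ fails to contain all finite critical values, Theorem~\ref{thm:obstructed}(iii) applies. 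The remaining subcase is that every non-attractor cycle of length $\geq 2$ contains all finite critical values --- which, since there is at least one non-attractor and the finite critical values cannot be spread across two disjoint cycles, forces there to be a unique non-attractor cycle of length $\geq 2$, containing every finite critical value and hence every finite postcritical vertex (each finite postcritical vertex is eventually periodic and the only finite cycle is this one). Now this cycle has some length $m\geq 2$. If $m$ is not a prime power, Theorem~\ref{thm:obstructed}(ii) applies. If $m=p^k$, then failure of condition (iv) of the present theorem means that, in the enumeration $\{v_i\}$ with $\tau(v_i)=v_{i+1}$, some critical value $v_j$ has index $j$ not divisible by $p^{k-1}$; after cyclically relabeling so that a critical value sits at index $0$, this is precisely hypothesis (i) of Theorem~\ref{thm:obstructed}, and we obtain an obstructed realization. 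This exhausts all cases, so $\Gamma$ has an obstructed Thurston map realizing it.

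The main obstacle is verifying that the case analysis in the ``only if'' direction is genuinely exhaustive and that the bookkeeping around ``finite postcritical vertex'' versus ``critical value'' versus the enumeration $\{v_i\}$ is airtight --- in particular, the step showing that ``every non-attractor cycle of length $\geq 2$ contains all finite critical values'' combined with ``$\Gamma$ has a non-attractor'' forces a \emph{single} non-attractor cycle of length $\geq 2$ that swallows all finite postcritical vertices. This uses the fact that distinct cycles are disjoint, that a finite critical value can belong to at most one cycle, and the observation that if the only finite cycle has length $\geq 2$ then every finite postcritical vertex lands in it. I would also double-check the boundary bookkeeping between conditions (i) and (iii)/(iv) (the ``at least four postcritical vertices'' hypothesis) so that no $\Gamma$ slips through the net, and confirm that the cyclic relabeling in the prime-power subcase does not disturb the divisibility condition's negation. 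None of these steps is deep, but the matching of hypotheses must be done carefully.
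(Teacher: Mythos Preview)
Your proposal is correct and follows essentially the same route as the paper's proof: both directions assemble Thurston's theorem, Levy--Berstein, Theorem~\ref{thm:unobstructed}, and Theorem~\ref{thm:obstructed}, and the contrapositive case analysis differs only in ordering (the paper splits first on the \emph{number} of non-attractor cycles rather than on their lengths, but the ingredients and conclusions coincide). The bookkeeping concern you flag is the right one---in your remaining subcase, the failure of condition~(iii) is what rules out a lone length-one non-attractor, and any further finite cycle (attractor or non-attractor) would force a finite critical value outside your length-$\geq 2$ cycle, contradicting the case hypothesis.
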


\noindent {\bf Notes and references.} In addition to the references we have already highlighted, we would like to mention two somewhat related works: in \cite{DKM}, the authors prove that given a finite set $X$ with $|X|\geq 2$, a map $F:X\to X$, and prescribed multiplicities at points of $X$, there is some rational map $f:\widehat{\mathbb C}\to\widehat{\mathbb C}$ so that the restriction of $f$ to its postcritical set coincides with the map $F$ in such a way that the multiplicity of each point $x\in X$ agrees with the local degree of $f$ at the corresponding postcritical point. We note that the map $F:X\to X$, together with the multiplicities at points of $X$,  is analogous to our abstract ramification portrait $\Gamma$; however, our abstract portraits satisfy a Riemann-Hurwitz condition, so they have a natural degree, $d\geq 2$. In our article, we are interested in the question of whether there is a rational map of degree $d$ that realizes a given abstract portrait. In \cite{DKM}, the degree of the rational maps constructed can be arbitrarily large. 

In \cite{DS}, the authors begin with a similar sort of abstract
combinatorial object called a portrait, and they construct a moduli
space of endomorphisms $\mathbb P^N\to \mathbb P^N$ (using GIT) that
consists of points realizing the given combinatorial data.  While we
prove existence of maps with a given abstract portrait, in \cite{DS}
it is not proven that the moduli spaces are nonempty.

\section{Preliminaries}\label{sect:preliminaries}

\noindent {\bf Portraits associated to Thurston maps.} Let
$f:(S^2,P_f)\to (S^2,P_f)$ be a Thurston map of degree $d$. The
\emph{ramification portrait} of $f$ is the weighted directed graph
$\Gamma$ such that the vertex set $V(\Gamma)$ is the union of the set
$C_f$ of critical points and the set $P_f$ of postcritical points, and
for each vertex $v$ there is an edge from $v$ to $f(v)$ with weight
the local degree $\deg_f(v)$ of $f$ at $v$.  By the Riemann-Hurwitz
formula,

\[
\sum_{v\in C_f} (\deg_f(v)-1) = 2d-2.
\]
Since $f$ has degree $d$, at each vertex $v$ the sum of the weights of
the incoming edges is at most $d$.  Note that $f$ is a topological
polynomial if and only if there is a vertex $v$ such that $f(v) = v$
and $\deg_f(v)=d$.

\smallskip

\noindent {\bf Abstract portraits.} Suppose $\Gamma$ is a finite
weighted directed graph (with the weights positive integers) such that
each vertex of $\Gamma$ is the source of exactly one edge.  Let
$\tau\co V(\Gamma)\to V(\Gamma)$ be the function which takes a vertex
$v$ to the target of the edge with source $v$.  We call the weight of
the edge from $v$ to $\tau(v)$ the \emph{degree} of $\tau$ at $v$ and
denote it by $\deg(v)$.  A vertex $v$ is \emph{critical} if $\deg(v) >
1$, and is \emph{postcritical} if there are a critical vertex $w$ and
a positive integer $k$ such that $\tau^{\circ k}(w) = v$. If $v$ is a
critical vertex, then $\tau(v)$ is called a \emph{critical value}.  We
denote the set of critical vertices by $C_{\Gamma}$, and we denote the
set of postcritical vertices by $P_{\Gamma}$. We say that $\Gamma$ is
an \emph{abstract portrait} if it satisfies the following:
\begin{itemize}
\item every vertex of $\Gamma$ is either critical or postcritical, 
\item there is an integer $d\ge 2$ such that 
$\sum_{v\in C_{\Gamma}} (\deg(v)-1) = 2d-2$, and 
\item for each vertex $v$ the sum of
the weights of the edges with target $v$ is at most $d$. 
\end{itemize}
We call $d$ the
\emph{degree} of the abstract portrait.
We say that an abstract portrait $\Gamma$ is \emph{realized} by a
Thurston map $f$ if $\Gamma$ is isomorphic to the portrait of $f$ 
(as weighted directed graphs).  An abstract portrait $\Gamma$ is
\emph{realizable} if it is realized by some Thurston map.

An abstract portrait of degree $d$ is an \emph{abstract polynomial
portrait} if there is a vertex $v$ such that $\tau(v)=v$ and
$\deg(v)=d$.  In this case we choose such a vertex and call it
$\infty$; the other vertices are called \emph{finite}. We call a cycle
(of the action of $\tau$ on $V(\Gamma)$) \emph{finite} if all of its
vertices are finite; that is, a cycle is finite if it does not consist
of the singleton $\infty$.

\smallskip

\noindent\textbf{Finite subdivision rules.} We define finite
subdivision rules in the present context of Thurston maps.  A finite
subdivision rule $\cR$ consists of the structure of a finite
CW complex $S_\cR$ on the 2-sphere (called the model subdivision complex), a
subdivision $\cR(S_\cR)$ of $S_\cR$ and a continuous cellular map
$\sigma_\cR\co \cR(S_\cR)\to S_\cR$ (called the \emph{subdivision
map}) whose restriction to each open cell is a homeomorphism onto an
open cell.  Furthermore, for each closed 2-cell $\widetilde{t}$ of
$S_\cR$ there are (i) a cell structure $t$ (called the \emph{tile
type} of $\widetilde{t}$) on the 2-disk $D^2$ such that the 1-skeleton
of $t$ is $\partial D^2$ and (ii) a continuous surjection $\psi_t\co
t\to \widetilde{t}$ (called the \emph{characteristic map} of
$\widetilde{t}$) whose restriction to each open cell is a
homeomorphism onto an open cell.

The map $\sigma_\cR$ is a Thurston map if it has degree at least 2.
Conversely, a Thurston map $f$ is the subdivision map of a finite
subdivision rule if and only if there exists a connected finite
$f$-invariant graph $G$ which contains the postcritical set of $f$.
Such a graph $G$ serves as the 1-skeleton of a model subdivision
complex.

\section{Realizing a portrait by an unobstructed map}\label{sec:poly}

In this section we prove Theorem \ref{main:thm:1}.  We begin with an example 
to illustrate the construction. Consider the abstract portrait 
$\Gamma$ that is shown below.

\[
\xymatrix{
\ast \ar[r]^2 & a \ar[r] & b \ar@/_/[r] & c \ar@/_/[l] & e \ar@/_/[r]_2 & f
\ar@/_/[l] & \infty \ar@(ur,dr)^4 \\
& \ast \ar[r]^2 & d \ar[ur]
}
\]

The proof defines an ordering of the finite postcritical vertices of
$\Gamma$.  In this case we use the ordering given by $a < b < c < d <
e < f$.  Following the terminology that will be defined in the proof,
the ordered sets $(a,b,c)$ and $(d)$ are called type-$1$ chains and
the ordered set $(e,f)$ is called a type-$2$ chain. 
(The first element of a type-$1$ chain is the image of a critical vertex
that is not postcritical, and the first element of a type-$2$ chain is
a periodic critical vertex.)  The model
subdivision complex $\SR$ is shown in Figure~\ref{fig:stickers} as a
stereographic projection of $S^2$ to the plane.  The 1-skeleton will
always be a star graph with central vertex $\infty$. The vertices of
$\Gamma$ are identified with the vertices of $\SR$.  The ordering of
the finite postcritical vertices chosen above determines the
counterclockwise ordering of the labels of the vertices in
Figure~\ref{fig:stickers}. The tile type $t$ is shown in
Figure~\ref{fig:tiletype}; $\SR$ is the image of $t$ under the
characteristic map $\psi\co t\to \SR$.  The \emph{label} of a vertex
$v$ of $t$ is $\psi(v)$; if $\psi(v)\ne\infty$ then $v$ is called a
\emph{finite} vertex.

\begin{figure}[!ht]
\labellist
\small\hair 2pt
\pinlabel $a$ at 28 126
\pinlabel $b$ at -5 62
\pinlabel $c$ at 28 2
\pinlabel $d$ at 118 2
\pinlabel $e$ at 152 62
\pinlabel $f$ at 118 126
\pinlabel $\infty$ at 62 68
\endlabellist
\centering
\includegraphics{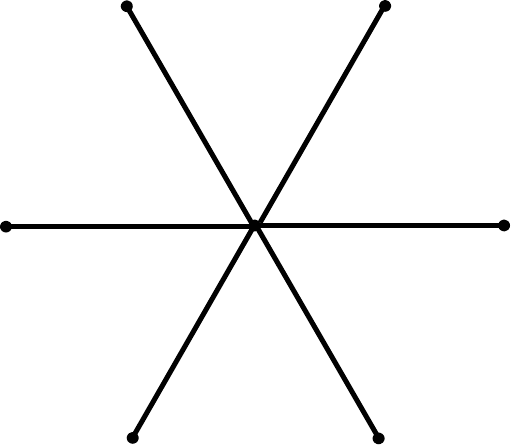}
\caption{The model subdivision complex $\SR$}
\label{fig:stickers}
\end{figure}

\begin{figure}[!ht]
\labellist
\small\hair 2pt
\pinlabel $a$ at 76 130
\pinlabel $b$ at 133 95
\pinlabel $c$ at 133 30
\pinlabel $d$ at 76 -5
\pinlabel $e$ at  14 30
\pinlabel $f$ at 14 95
\pinlabel $\infty$ at 112 130
\pinlabel $\infty$ at 153 63
\pinlabel $\infty$ at 112 -4 
\pinlabel $\infty$ at 33 -4
\pinlabel $\infty$ at -6 63
\pinlabel $\infty$ at 32 130
\endlabellist
\centering
\includegraphics{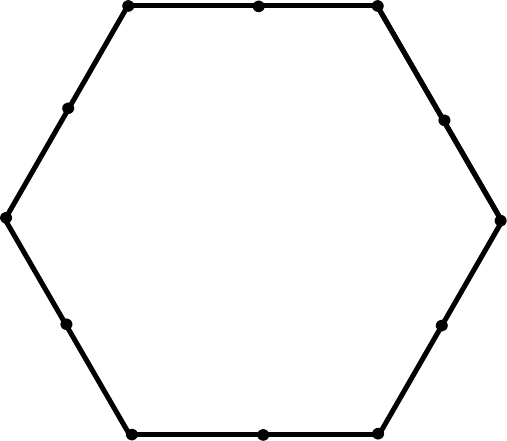}
\caption{The tile type $t$, which $\psi$ maps to $\SR$ by identifying
edges in pairwise fashion} \label{fig:tiletype}
\end{figure}

We will give a combinatorial description of the subdivision
$\cR(\SR)$.  We first add edges to $\SR$ that will ensure that the
subdivision map cannot have any Levy cycles (stage $1$), and then add
more edges to get $\cR(\SR)$ (stages 2 and 3).
Figure~\ref{fig:stage1} shows the construction after the first stage
from the point of view of the tile type $t$.  No further changes are
made in the second stage since there are already the correct number of
subtiles.  The \emph{label} of each vertex is drawn outside $t$.
Every vertex of the subdivision whose label is not $\infty$ is a
\emph{finite} vertex. Every finite vertex $v$ has an \emph{image
label}, which is $\subm(\psi(v))$. It is drawn inside $t$.  (Of
course, this is abuse of notation, since we haven't finished the
construction yet and hence haven't defined the subdivision map $\subm$
yet.)

To complete the construction (stage 3) we add stickers as needed in
each subtile so that each subtile is a $12$-gon, every other vertex is
the original vertex labeled $\infty$, and the image labels of its
finite vertices are in the proper cyclic order.  (A sticker is an edge
with a vertex of valence one, resembling a stick pin with a spherical
head.) It is straightforward to define the subdivision map $\subm$ so
that its restriction to each open cell is a homeomorphism to an open
cell and it takes each finite vertex to its image label.
Figure~\ref{fig:stage3} shows the subdivision of the tile type $t$,
and Figure~\ref{fig:firstsub} shows the subdivision $\cR(\SR)$.

If $\gamma$ is a simple closed curve in $S^2 \setminus P$, let
$D_{\gamma}$ be the component of $S^2\setminus \gamma$ that does not
contain $\infty$.  If $\gamma$ is an element of a Levy cycle (or, more
generally, of a multicurve), then $D_{\gamma}$ must contain at least
two postcritical points.  The five new edges in
Figure~\ref{fig:stage1} ensure that if we extend the subtiling so that
it combinatorially describes a finite subdivision rule, then the
subdivision map cannot have a Levy cycle.  This can be proven as
follows.  In the model subdivision complex, the new arc whose
barycenter has label $d$ bounds a closed disk $D$ such that
$\textrm{int}(D) \cap P_f = \{c\}$ and its boundary contains $\infty$.
It follows from Lemma \ref{lem:Levyedge} that for any positive integer
$n$, each element of a Levy cycle can be isotoped rel the postcritical
set to be disjoint from all new edges of the $n^{\text{th}}$
subdivision $\cR^{n}(\SR)$.  Since the interior of the disk $D$
contains the single postcritical point $c$ and its boundary contains
$\infty$, the vertex $c$ cannot be in the open disk $D_{\gamma}$ for a
Levy curve $\gamma$. In the next two subdivisions there will be new
edges enclosing the stickers with vertices $d$, $b$ and $a$, so none
of these vertices could be in the open disk $D_{\gamma}$ for a Levy
curve $\gamma$.  There is a new edge joining the vertex $e$ to an
$\infty$-vertex, so the vertex $e$ cannot be in the open disk
$D_{\gamma}$ for a Levy curve $\gamma$.  In the next subdivision there
will be a new edge joining the vertex labeled $f$ to an
$\infty$-vertex, so that vertex cannot be in the open disk
$D_{\gamma}$ for a Levy curve $\gamma$.  Hence no finite vertex can be
in a Levy disk, so there are no Levy cycles and hence the subdivision
map is equivalent to a rational map.  This concludes our example.

\begin{figure}[!ht]
\labellist
\small\hair 2pt
\pinlabel $a$ at 76 131
\pinlabel $b$ at 133 95
\pinlabel $c$ at 133 30
\pinlabel $d$ at 76 -5
\pinlabel $e$ at  14 30
\pinlabel $f$ at 14 95
\pinlabel $\infty$ at 112 130
\pinlabel $\infty$ at 153 63
\pinlabel $\infty$ at 112 -4
\pinlabel $\infty$ at 33 -4
\pinlabel $\infty$ at -6 63
\pinlabel $\infty$ at 32 130
\pinlabel $\textcolor{darkgray}{b}$ at 76 120
\pinlabel $\textcolor{darkgray}{c}$ at 119 90
\pinlabel $\textcolor{darkgray}{b}$ at 122 34
\pinlabel $\textcolor{darkgray}{c}$ at 75 6
\pinlabel $\textcolor{darkgray}{f}$ at 28 31
\pinlabel $\textcolor{darkgray}{f}$ at 21 40 
\pinlabel $\textcolor{darkgray}{e}$ at 24 94
\pinlabel $\textcolor{darkgray}{d}$ at 101 48
\pinlabel $\textcolor{darkgray}{a}$ at 47 83
\endlabellist
\centering
\includegraphics{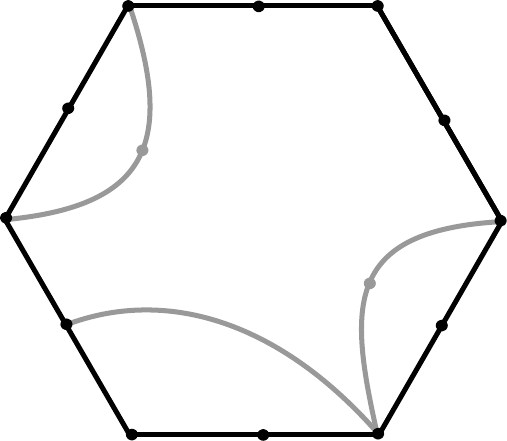}
\caption{The construction after stage $1$}
\label{fig:stage1}
\end{figure}

\begin{figure}[!ht]
\labellist
\small\hair 2pt
\pinlabel $a$ at 76 131
\pinlabel $b$ at 133 95
\pinlabel $c$ at 133 30
\pinlabel $d$ at 76 -5
\pinlabel $e$ at  14 30
\pinlabel $f$ at 14 95
\pinlabel $\infty$ at 112 130
\pinlabel $\infty$ at 153 63
\pinlabel $\infty$ at 112 -4
\pinlabel $\infty$ at 33 -4
\pinlabel $\infty$ at -6 63
\pinlabel $\infty$ at 32 130
\pinlabel $\textcolor{darkgray}{b}$ at 76 120
\pinlabel $\textcolor{darkgray}{c}$ at 119 90
\pinlabel $\textcolor{darkgray}{b}$ at 122 34
\pinlabel $\textcolor{darkgray}{c}$ at 73 6
\pinlabel $\textcolor{darkgray}{f}$ at 28 31
\pinlabel $\textcolor{darkgray}{f}$ at 21 41
\pinlabel $\textcolor{darkgray}{e}$ at 24 95
\pinlabel $\textcolor{darkgray}{d}$ at 101 48
\pinlabel $\textcolor{darkgray}{a}$ at 47 83
\pinlabel $\textcolor{darkgray}{f}$ at 36 93
\pinlabel $\textcolor{darkgray}{b}$ at 26 76
\pinlabel $\textcolor{darkgray}{c}$ at 29 81
\pinlabel $\textcolor{darkgray}{d}$ at 25 88
\pinlabel $\textcolor{darkgray}{e}$ at 86 45
\pinlabel $\textcolor{darkgray}{c}$ at 116 29
\pinlabel $\textcolor{darkgray}{e}$ at 114 50
\pinlabel $\textcolor{darkgray}{f}$ at 118 43
\pinlabel $\textcolor{darkgray}{a}$ at 127 40
\pinlabel $\textcolor{darkgray}{d}$ at 57 18
\pinlabel $\textcolor{darkgray}{e}$ at 41 26
\pinlabel $\textcolor{darkgray}{a}$ at 77 17
\pinlabel $\textcolor{darkgray}{b}$ at 77 10
\endlabellist
\centering
\includegraphics{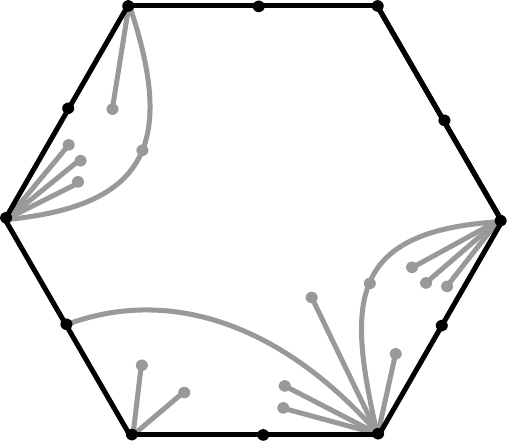}
\caption{The subdivision of the tile type after stage $3$}
\label{fig:stage3}
\end{figure}

\begin{figure}[!ht]
\labellist
\small\hair 2pt
\pinlabel $a$ at 30 162
\pinlabel $\textcolor{darkgray}{b}$ at 36 168
\pinlabel $b$ at 0 103
\pinlabel $\textcolor{darkgray}{c}$ at 0 92
\pinlabel $c$ at 34 32
\pinlabel $\textcolor{darkgray}{b}$ at 42 32
\pinlabel $f$ at 106 166
\pinlabel $\textcolor{darkgray}{e}$ at 115 166
\pinlabel $e$ at 140 103
\pinlabel $\textcolor{darkgray}{f}$ at 140 89
\pinlabel $d$ at 112 42
\pinlabel $\textcolor{darkgray}{c}$ at 112 32
\pinlabel $\infty$ at 63 102
\pinlabel $\textcolor{darkgray}{a}$ at 128 177
\pinlabel $\textcolor{darkgray}{d}$ at 17 11
\pinlabel $\textcolor{darkgray}{e}$ at 30 63
\pinlabel $\textcolor{darkgray}{f}$ at 30 55
\pinlabel $\textcolor{darkgray}{a}$ at 31 45
\pinlabel $\textcolor{darkgray}{c}$ at 52 42
\pinlabel $\textcolor{darkgray}{e}$ at 73 47
\pinlabel $\textcolor{darkgray}{a}$ at 91 42
\pinlabel $\textcolor{darkgray}{b}$ at 100 37
\pinlabel $\textcolor{darkgray}{d}$ at 108 66
\pinlabel $\textcolor{darkgray}{e}$ at 119 79
\pinlabel $\textcolor{darkgray}{f}$ at 92 154
\pinlabel $\textcolor{darkgray}{b}$ at 113 135
\pinlabel $\textcolor{darkgray}{c}$ at 113 142
\pinlabel $\textcolor{darkgray}{d}$ at 112 154
\endlabellist
\centering
\includegraphics{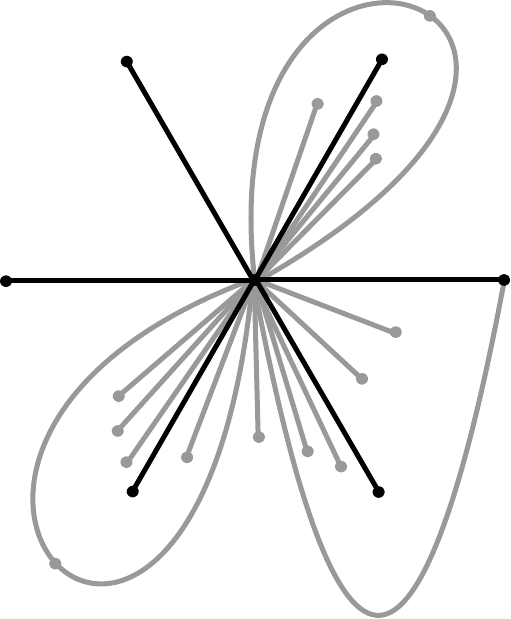}
\caption{The subdivision of the model subdivision complex}
\label{fig:firstsub}
\end{figure}

\bigskip

We call an edge of a subdivision $\cR^n(\SR)$ (of a finite 
subdivision rule $\cR$) a \emph{new edge} if it is not contained in an edge
of $\SR$.  The following lemma plays a crucial role in the proof of
Theorem \ref{main:thm:1}.

\begin{lemma}\label{lem:Levyedge} Suppose $f$ is a Thurston map which
is also the subdivision map $\subm$ of a finite subdivision rule
$\cR$.  Suppose $\{\delta_0,\dots,\delta_{k-1}, \delta_{k}=\delta_0\}$
is a Levy cycle for $f$ and let $n$ be a positive integer.  Then for
each $i\in \{1,\dots,k\}$, $\delta_i$ can be isotoped rel $P_f$ so
that it is disjoint from each new edge of the subdivision
$\cR^n(\SR)$.
\end{lemma}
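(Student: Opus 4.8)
The plan is to argue by contradiction using an innermost-disk / minimal-intersection argument, exploiting the fact that a Levy curve $\delta_{i-1}$ maps \emph{by degree one} onto $\delta_i$, so topologically it behaves like a homeomorphism near $\delta_{i-1}$, and combining this with the cellular structure coming from the finite subdivision rule. First I would fix the setup: isotope each $\delta_i$ (rel $P_f$) to be transverse to the $1$-skeleton of $\cR^n(\SR)$ and to meet it in the minimal number of points within its homotopy class; in particular $\delta_i$ meets each new edge efficiently. The claim is that this minimal number is zero for every new edge, and I would prove this by downward induction on $n$, or more precisely by a single induction that propagates ``disjointness from new edges of $\cR^{m}(\SR)$'' forward in $m$ using the Levy condition.

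The key step is the inductive transfer. Suppose some $\delta_i$ has been isotoped to be disjoint from all new edges of $\cR^{n-1}(\SR)$; then $\delta_i$ lies inside the interiors of the tiles of $\cR^{n-1}(\SR)$, i.e.\ inside the preimages under $\subm$ of the tiles of $\SR$ (here I would use that $\subm\co \cR(\SR)\to\SR$ carries tiles of $\cR(\SR)$ homeomorphically onto tiles of $\SR$, and iterate). By the Levy condition, there is a component $\widetilde\delta_{i-1}$ of $f^{-1}(\delta_i)$ homotopic to $\delta_{i-1}$ rel $P_f$ and mapping to $\delta_i$ by degree one; since $\subm=f$ is cellular and injective on each open cell, $\widetilde\delta_{i-1}$ lies in the interiors of tiles of $\cR^{n}(\SR)$ — that is, $\widetilde\delta_{i-1}$ is already disjoint from every new edge of $\cR^{n}(\SR)$ (indeed from the whole $1$-skeleton of $\cR^n(\SR)$, though we only need the new edges). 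Replacing $\delta_{i-1}$ by the homotopic curve $\widetilde\delta_{i-1}$ therefore achieves the conclusion for $\delta_{i-1}$ at level $n$. Running this around the Levy cycle $\delta_k=\delta_0,\delta_{k-1},\dots,\delta_1$, and using that we may perform these isotopies simultaneously because distinct $\delta_i$'s are disjoint and the preimage components we select are disjoint, yields the statement for all $i$.

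The step I expect to be the main obstacle is making the simultaneous-isotopy bookkeeping honest: a priori, isotoping $\delta_{i-1}$ onto the chosen lift $\widetilde\delta_{i-1}$ might be forced to cross other curves $\delta_j$ or to undo a previously arranged disjointness, and one must check that the homotopy class (rel $P_f$) of the whole multicurve, and the Levy-cycle structure, are preserved. I would handle this by choosing, once and for all, a representative of each homotopy class that already lies in tile interiors of $\cR^n(\SR)$ — this is possible because each $\delta_i$ is homotopic rel $P_f$ to its own degree-one lift $\widetilde\delta_{i-1}$ (shifting indices cyclically, so each curve in the cycle is homotopic to a curve avoiding the level-$n$ $1$-skeleton), and two homotopic essential simple closed curves in $S^2\setminus P_f$ that each avoid the $1$-skeleton of $\cR^n(\SR)$ are isotopic in $S^2\setminus P_f$ through curves transverse to that $1$-skeleton with nonincreasing intersection number, so in particular the intersection number with each new edge can be taken to be zero. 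A secondary point to verify carefully is that the $1$-skeleton of $\SR$ contains $P_f$ (true by construction of the finite subdivision rule from an invariant graph $G\supseteq P_f$), so ``rel $P_f$'' isotopies and ``avoiding the $1$-skeleton'' are compatible; this is exactly what lets the argument in the worked example conclude that a Levy disk $D_\gamma$ cannot contain a finite vertex once a new edge separates that vertex from $\infty$.
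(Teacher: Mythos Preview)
Your inductive step contains a genuine error. You write that if $\delta_i$ is disjoint from all new edges of $\cR^{n-1}(\SR)$ then $\delta_i$ lies in the interiors of the tiles of $\cR^{n-1}(\SR)$. This is false: being disjoint from the \emph{new} edges does not make $\delta_i$ disjoint from the whole $1$-skeleton, only from $E_{n-1}\setminus E$, where $E$ is the $1$-skeleton of $\SR$ and $E_{n-1}$ that of $\cR^{n-1}(\SR)$. In fact an essential simple closed curve in $S^2\setminus P_f$ can \emph{never} be isotoped into a single open tile of $\SR$, since each tile is a disk and $P_f$ lies in the $1$-skeleton; so $\delta_i$ must cross $E$. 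Consequently the degree-one lift $\widetilde\delta_{i-1}\subset f^{-1}(\delta_i)$ crosses $f^{-1}(E)=E_1$, and the edges of $E_1$ it meets may well be new edges of $\cR(\SR)$ (the preimage of an old edge is a union of level-$1$ edges, not all of them old). So your transfer ``disjoint from new edges at level $n-1$ $\Rightarrow$ disjoint from new edges at level $n$'' breaks down already at the passage from $n-1=0$ to $n=1$.

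The paper's proof sidesteps this by a cyclic counting argument rather than a straight induction. One records, for each $i$, the minimal geometric intersection numbers $a_i=\min\#(\delta\cap E)$ and $b_i=\min\#(\delta\cap E_1)$ over curves $\delta$ isotopic to $\delta_i$. Taking a minimizing $\delta$ for $a_i$ and its degree-one lift $\gamma$ (isotopic to $\delta_{i-1}$), one gets $a_{i-1}\le b_{i-1}\le \#(\gamma\cap E_1)=\#(\delta\cap E)=a_i$; running $i$ around the Levy cycle forces every inequality to be an equality, whence $b_{i-1}=a_{i-1}$ and a minimizer for $b_{i-1}$ avoids $E_1\setminus E$. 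The case $n>1$ is then obtained by applying the $n=1$ argument to $f^{\circ p}$ for any $p\ge n$ with $p\equiv 1\pmod k$. The point you are missing is exactly this cyclic squeeze: one cannot control the intersections of $\widetilde\delta_{i-1}$ with new edges from the hypothesis on $\delta_i$ alone, but the Levy cycle lets you chain inequalities $a_0\le a_1\le\cdots\le a_{k}=a_0$ and conclude they are all equalities. Your identification of the ``simultaneous-isotopy bookkeeping'' as the main obstacle is therefore off target; that issue does not even arise once the counting argument is in place.
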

\begin{proof} We first assume $n=1$.  Let $E$ be the $1$-skeleton of
$\SR$ and let $E_1$ be the $1$-skeleton of $\cR(\SR)$.  Each
$\delta_i$ can be isotoped so that $\delta_i\cap E_1$ is finite.  For
each $i\in \{0,\dots,k\}$, let $a_i$ be the minimum of $\#(\delta \cap
E)$, where $\delta$ is a curve that is isotopic rel $P_f$ to
$\delta_i$, and let $b_i$ be the minimum of $\#(\delta\cap E_1)$,
where $\delta$ is a curve that is isotopic rel $P_f$ to $\delta_i$.
Let $i\in\{1,\dots,k\}$, and let $\delta$ be a curve that is isotopic
rel $P_f$ to $\delta_i$ such that $a_i = \#(\delta \cap E)$.  Let
$\gamma$ be a component of $f^{-1}(\delta)$ which maps to $\delta$ by
degree $1$ and is isotopic rel $P_f$ to $\delta_{i-1}$.  Then $a_{i-1}
\le b_{i-1} \le \#(\gamma \cap E_1) = a_i$.  Since this is true for
every $i$ and $i$ varies cyclically, each of these inequalities is an
equality.  So $a_{i-1} = a_i$ and $a_{i-1} = b_{i-1}$. This implies
that $\gamma$ doesn't intersect $E_1 \setminus E$.  This establishes
the result for $n=1$.

Now suppose that $n>1$. Let $p$ be a positive integer with $p\ge n$ and 
$p\equiv 1 \mod k$.  Then  $\{\delta_0,\dots,\delta_{k-1},
\delta_{k}=\delta_0\}$ is a Levy cycle for $f^{\circ p}$.  
By the previous paragraph applied to
$f^{\circ p}$, each $\delta_i$ can be isotoped so that it does not
intersect any new edge of $\cR^p(\SR)$.  Since each new edge of $\cR^n(\SR)$
is a union of new edges of $\cR^p(\SR)$, then each $\delta_i$ can be
isotoped so that it is disjoint from each new edge of $\cR^n(\SR)$.
\end{proof}

\begin{proof}[Proof of Theorem \ref{main:thm:1}] Let $\Gamma$ be an
abstract polynomial portrait.  Let $C' = C_{\Gamma}\setminus
\{\infty\}$ (the set of finite critical vertices), and let $P' =
P_{\Gamma} \setminus \{\infty\}$ (the set of finite postcritical
vertices).  Let $V_{\Gamma} = \{\tau(x): x\in C_{\Gamma}\}$ (the set of
critical values) and let $V' = V_{\Gamma} \setminus \{\infty\}$ (the
set of finite critical values).  Let $A = \{v\in V': v = \tau(c)\
\textrm{for some}\ c\in C_{\Gamma}\setminus P_{\Gamma}\}$.  For each
$v\in A$, we choose an element $c_v \in C_{\Gamma}\setminus
P_{\Gamma}$ with $\tau(c_v) = v$.

Let $n$ be the cardinality of $P'$. A key step is to appropriately
order the elements of $P'$ by naming them $a_1,\dots,a_n$.  To do
this, we partition $P'$ into \emph{chains}.  We define the chains
recursively. We will put postcritical vertices that have already been
placed in chains in a set $\widetilde{A}$. To begin the construction,
let $\widetilde{A} = \emptyset$ and let $i=1$.
\medskip

\noindent\textbf{The ordering.} Suppose for the recursive step that
$A\not\subset \widetilde{A}$, $i\in \{1,\dots,n\}$, and that we have already
defined $a_j$ for $j\in \{1,\dots,i-1\}$.  
If there is a vertex in $P'\setminus
\widetilde{A}$ that is not periodic under $\tau$, then we can choose
an element $v\in A\setminus \widetilde{A}$ such that $v$ is not the
image under $\tau$ of a postcritical vertex.  If every vertex in
$P'\setminus \widetilde{A}$ is periodic under $\tau$, choose $v\in
A\setminus \widetilde{A}$.  In each case, let $a_i = v$ and add $v$ to
$\widetilde{A}$.  If $\tau(v) \notin \widetilde{A}$, we let $a_{i+1} =
\tau(v)$ and add $\tau(v)$ to $\widetilde{A}$.  We continue until we
reach an index $j$ such that $\tau(a_j)$ is in $\widetilde{A}$.  At
this point we stop this iteration of the recursion. We define the
ordered set $(a_i,\dots,a_{j})$ to be the \emph{chain} of each of its
elements. We call it a \emph{type-$1$} chain.  It begins with an
element of $A$. The \emph{first element} of the chain is $a_i$, and
the \emph{last} element of the chain is $a_{j}$. The \emph{length} of
the chain is $j+1-i$. After redefining $i$ to be $j+1$, we continue
this recursive step as long as possible.

Once we can no longer continue this recursion, the elements of $P'$
which remain are exactly the elements of finite (attractor) cycles
which are connected components of $\Gamma$.  To start the next
recursion, we choose a critical vertex $v$ in a remaining attractor
cycle and let $a_i = v$.  Let $k$ be the number of elements in the
attractor cycle. For $1\le j < k$, let $a_{i+j} = \tau^{\circ
j}(a_i)$. As before $a_i$ is the \emph{first} element of the chain,
$a_{i+k-1}$ is the \emph{last} element of the chain, and the
\emph{length} of the chain is $k$.  We call it a \emph{type-$2$}
chain. We continue recursively to choose all of the points in the
other attractor cycles.  After doing this, the elements of $P'$ are
$a_1,\dots,a_n$ in order.
\medskip

\noindent\textbf{Construction of $\SR$.} We next construct the
associated finite subdivision rule $\cR$.  The $1$-skeleton of the
model subdivision complex $\SR$ is a tree as in Figure
\ref{fig:stickers}.  There is one central vertex.  We identify
$\infty\in\Gamma$ with this central vertex.  There are $n$
``stickers'' (a sticker is an edge of the graph with a vertex of
valence one, like a stick pin with a spherical head) from $\infty$ to
valence $1$ vertices $a_1,\dots,a_n$, in counterclockwise order.  We
identify $a_1,\dots,a_n\in\Gamma$ with these valence $1$ vertices.
The tile type $t$ is a $(2n)$-gon, which we think of as an $n$-gon
with each edge bisected.  The characteristic map $\psi:t\to \SR$ maps
the edge barycenters to the sticker heads and the other vertices to
$\infty$.  The edge barycenters are called finite vertices and the
others are called $\infty$-vertices.  More generally, a vertex of some
subdivision of $t$ which is not a vertex of $t$ is called a finite
vertex.  Every vertex $v$ of $t$ is labeled by $\psi(v)$.  These
vertex labels are placed outside $t$ in the figures.  We use clockwise
order on $\partial t$.

If $k$ is a integer with $k\ge 2$, a
\emph{$k$-doodle} is a graph with three vertices and $k$ edges (none of them
loops) such that one vertex (the central vertex) has valence $k$,
one vertex (the head) has valence $1$, and the third vertex (the foot) has
valence $k-1$. Note that a $2$-doodle is a bisected arc.

We define a subdivision $\cR(t)$ of $t$. We do this in three stages.
We first define a subtiling of $t$ into subtiles such that the finite
vertices in each subtile are in the proper cyclic order.  This means
that there might be fewer than $n$ of them, but they will have image
labels, which are distinct elements of $\{a_1,...,a_n\}$, and, when
taken in clockwise order, their image labels have the same cyclic
order as in $(a_1,...,a_n)$.  We then add arcs and $k$-doodles as
determined by the critical vertices of $\Gamma$ so that we have $d$
subtiles.  Finally, we add stickers as necessary to get the
subdivision $\cR(t)$.  The tiles of the first stage will be defined so
that the resulting subdivision map does not have Levy cycles.  We do
this by ensuring that there is an iterated subdivison $\cR^n(t)$ of
$t$ such that for each finite vertex $v$ of $t$ except possibly one,
either there is a new edge from $v$ to an $\infty$-vertex or there is
an arc (made out of two or four new edges) from the $\infty$-vertex
before $v$ to the $\infty$-vertex after $v$.

As we construct $\cR(t)$, we will give \emph{image labels} to its
vertices. The image label of a vertex is the vertex it will map to
under the analog of $\psi$ from $\cR(t)$ to $\SR$.  So for a vertex
$v$ in $t$, the image label of $v$ is defined to be $\tau(\psi(v))$.
We will keep track of the critical vertices that have already been
accounted for during the construction of $\cR(t)$ in a set
$\widetilde{C}$. For the beginning of the construction, we define
$\widetilde{C} = \emptyset$.
\medskip

\noindent\textbf{Stage 1.} Suppose $a_i$ is the last element of a
chain, and $a_j$ is the first element of the next chain (in cyclic
order). So either $1\le i < n$ and $j=i+1$ or $i=n$ and $j=1$.  The
construction in stage 1 depends on the types of the chains which
contain $a_i$ and $a_j$.  We consider various cases.

If $a_i$ and $a_{j}$ are in distinct chains of type $1$ or if they are
in the same chain of type $1$ (there is only one chain) and
$\tau(a_i)\ne a_j$, then we add a $k$-doodle, with $k$ being the
degree of the critical vertex $c_{a_{j}}$, with head the
$\infty$-vertex after $a_i$ and with foot the $\infty$-vertex before
$a_i$. We give the central vertex of the $k$-doodle image label
$a_{j}$, and add $c_{a_{j}}$ to $\widetilde{C}$.  See Figure
\ref{fig:type1type1}, which, like Figures
\ref{fig:type1type2}--\ref{fig:type1cycle}, is drawn with $k=2$.

If $a_i$ is in a chain of type $1$ and $a_{j}$ is in a chain of type $2$,
then we add $k-1$ edges joining $a_j$ to
the $\infty$-vertex before $a_i$ (where $k = \deg(a_j)$)
and add $a_{j}$ to $\widetilde{C}$.
See Figure \ref{fig:type1type2}.

Suppose $a_i$ is in a chain of type $2$ and $a_j$ is in a chain of type $1$
(this can only occur if $i=n$ and $j=1$). If $a_i$ is in a chain of length $1$,
then we don't do anything at this stage.  
If $a_i$ is in a chain of length greater
than $1$, then we add a $k$-doodle 
(with $k = \deg(c_{a_{j}})$) with head the $\infty$-vertex after $a_i$ and with 
foot the $\infty$-vertex before $a_i$. We give the central vertex
of the $k$-doodle image label $a_{j}$, and add $c_{a_{j}}$ to
$\widetilde{C}$.
Figure \ref{fig:type2type1} shows both possibilities.

If $a_i$ and $a_{j}$ are both in chains of type $2$ and $a_i$ is in a
chain of length $1$, then to $a_{j}$ we add $k-1$ edges joining it to
the $\infty$-vertex before $a_{j}$ (where $k= \deg(a_j)$) and add
$a_{j}$ to $\widetilde{C}$.  If $a_i$ and $a_{j}$ are both in chains
of type $2$ and $a_i$ is in a chain of length greater than $1$, then
to $a_{j}$ we add $k-1$ edges joining it to the $\infty$-vertex before
$a_{i}$ (where $k=\deg(a_j)$) and add $a_{j}$ to $\widetilde{C}$.  The
two possibilities are shown in Figure \ref{fig:type2type2}.

Now suppose that there is a single chain, it has type $1$, and
$\tau(a_n) = a_1$. This is the only remaining case.  The
Riemann-Hurwitz condition implies that if there is just one finite
critical value, then $\Gamma$ has only two critical vertices and their
degrees both equal the degree of $\Gamma$.  Hence the finite critical
vertex is the only vertex of $\Gamma$ which $\tau$ maps to the finite
critical value.  This is impossible in the present case because
$a_1\in A$ and $\tau(a_n)=a_1$.  So either one of the $a_i$'s is a
critical vertex or one of the $a_i$'s with $i>1$ is a critical value.

First suppose that $r\in \{1,\dots,n\}$ and $a_r$ is a critical vertex
with degree $k$. We add $k-1$ arcs in $t$ from $a_r$ to the
$\infty$-vertex before $a_r$, and we add $a_r$ to $\widetilde{C}$. See
the left side of Figure \ref{fig:type1cycle}.

If none of the $a_i$'s is a critical vertex, then some $a_i$ with
$i>1$ is a critical value. In this case, suppose $r\in\{2,\dots,n\}$
such that $a_r$ is a critical value.  Let $k_1 = \deg(c_{a_1})$, and
let $k_r = \deg(c_{a_r})$.  We add a $k_r$-doodle with head the
$\infty$-vertex after $a_n$ and with foot the $\infty$-vertex before
$a_n$. We give its central vertex image label $a_r$, and we add
$c_{a_r}$ to $\widetilde{C}$.  We then add a $k_1$-doodle with head
the $\infty$-vertex after $a_n$ and with foot the $\infty$-vertex
before $a_n$ as indicated in Figure \ref{fig:type1cycle}.  We give its
central vertex image label $a_1$, and we add $c_{a_1}$ to
$\widetilde{C}$.  See the right side of Figure \ref{fig:type1cycle}.
This completes stage $1$ of the construction.

\begin{figure}[!ht]
\labellist
\small\hair 2pt
\pinlabel $a_i$ at 29 26
\pinlabel $a_j$ at 103 23
\pinlabel $\textcolor{darkgray}{a_j}$ at 50 1
\pinlabel $\textcolor{darkgray}{\tau(a_i)}$ at 40 14
\pinlabel $\textcolor{darkgray}{\tau(a_j)}$ at 84 14
\endlabellist
\centering
\includegraphics{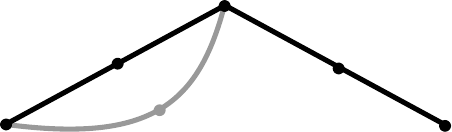}
\caption{One or two chains of type $1$}
\label{fig:type1type1}
\end{figure}

\begin{figure}[!ht]
\labellist
\small\hair 2pt
\pinlabel $a_i$ at 29 26
\pinlabel $a_j$ at 103 23
\pinlabel $\textcolor{darkgray}{\tau(a_i)}$ at 40 13
\pinlabel $\textcolor{darkgray}{\tau(a_j)}$ at 98 7
\endlabellist
\centering
\includegraphics{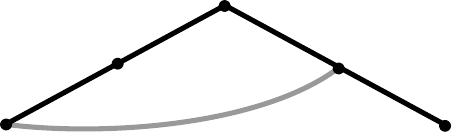}
\caption{A chain of type $1$ followed by a chain of type $2$}
\label{fig:type1type2}
\end{figure}

\begin{figure}[!ht]
\labellist
\small\hair 2pt
\pinlabel $a_n$ at 29 26
\pinlabel $a_1$ at 103 23
\pinlabel $\textcolor{darkgray}{a_n}$ at 39 14
\pinlabel $a_n$ at 197 26
\pinlabel $a_1$ at 271 23
\pinlabel $\textcolor{darkgray}{a_1}$ at 215 0
\pinlabel $\textcolor{darkgray}{\tau(a_n)}$ at 209 15
\pinlabel $\textcolor{darkgray}{\tau(a_1)}$ at 252 15
\endlabellist
\centering
\includegraphics{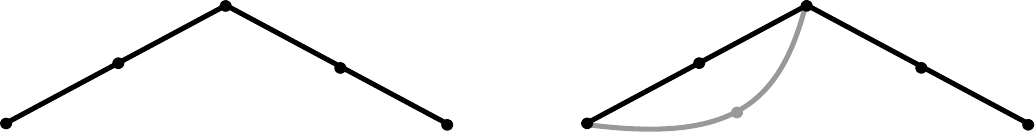}
\caption{A chain of type $2$ followed by a chain of type $1$}
\label{fig:type2type1}
\end{figure}

\begin{figure}[!ht]
\labellist
\small\hair 2pt
\pinlabel $a_i$ at 29 26
\pinlabel $a_j$ at 103 23
\pinlabel $\textcolor{darkgray}{a_i}$ at 39 13
\pinlabel $a_i$ at 197 26
\pinlabel $a_j$ at 271 23
\pinlabel $\textcolor{darkgray}{\tau(a_j)}$ at 95 9
\pinlabel $\textcolor{darkgray}{\tau(a_i)}$ at 209 15
\pinlabel $\textcolor{darkgray}{\tau(a_j)}$ at 264 8
\endlabellist
\centering
\includegraphics{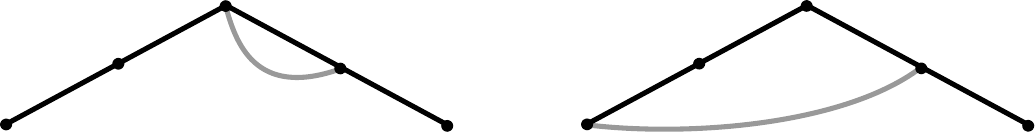}
\caption{Two chains of type $2$}
\label{fig:type2type2}
\end{figure}

\begin{figure}[!ht]
\labellist
\small\hair 2pt
\pinlabel $a_r$ at 103 27
\pinlabel $a_n$ at 197 30
\pinlabel $a_1$ at 271 27
\pinlabel $\textcolor{darkgray}{a_r}$ at 220 10
\pinlabel $\textcolor{darkgray}{a_1}$ at 227 1
\pinlabel $\textcolor{darkgray}{\tau(a_r)}$ at 94 16
\pinlabel $\textcolor{darkgray}{\tau(a_1)}$ at 259 17
\pinlabel $\textcolor{darkgray}{a_1}$ at 209 21
\endlabellist
\centering
\includegraphics{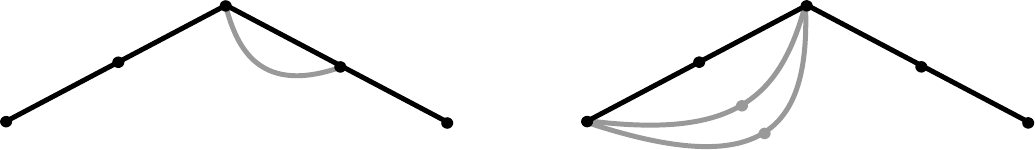}
\caption{A single chain and it has type $1$}
\label{fig:type1cycle}
\end{figure}
\medskip

\noindent\textbf{Verification that image labels are consistent after
Stage 1.} We now look at what we have after stage $1$.  Every subtile
except for the central one is either a $2$-gon or a $4$-gon, and so
there are only one or two finite vertices.  For a $2$-gon there is
only one finite vertex and so its image label is in proper cyclic
order.

For a $4$-gon, there are two finite vertices, so their image labels
are in proper cyclic order if they are distinct.  The only potential
problem is if, in the notation of Figure \ref{fig:type1type1},
$\tau(a_i) = a_j$.  Suppose that this happens.  Then $a_i$ and $a_j$
are in different chains.  Because $\tau(a_i) = a_j$ and $a_i$ and
$a_j$ are in different chains, $a_j$ is not periodic under $\tau$.
But if there exists such a vertex when a chain is defined, then the
first vertex of that chain must not be the image of a postcritical
vertex.  So it is not possible that $\tau(a_i) = a_j$.  Hence the two
finite vertices of every 4-gon have different image labels.

  Now we verify that the same is true for the central tile $s$.
Suppose that $a_i,...,a_k$ are the vertices of a chain in order.  Then
$a_i,...,a_k$ are labels of consecutive finite vertices $v_i,...,v_k$
of $t$.  Moreover, $v_i,...,v_{k-1}$ are consecutive finite vertices
of $s$.  Their image labels are $a_{i+1},...,a_k$.  In the cases
corresponding to Figure \ref{fig:type1type1}, Figure
\ref{fig:type1cycle} and the right half of Figure
\ref{fig:type2type1}, the finite vertex of $s$ preceeding $v_i$ has
image label $a_i$.  These are the only cases in which $a_i$ is an image
label of a vertex of $s$.  In the situation of
Figure~\ref{fig:type1cycle}, there is only one chain and $\tau(a_n) =
a_1$, so it is clear in this case that the image labels of $s$ are in
proper cyclic order.  In all other cases except those corresponding to
the left halves of Figures~\ref{fig:type2type1} and
\ref{fig:type2type2}, $\tau(a_k)$ is not the image label of a vertex of
$s$.  In the left halves of Figures~\ref{fig:type2type1} and
\ref{fig:type2type2}, we have that $k = 1$ and $\tau(a_k) = a_k$.
Hence the vertices among $a_i,...,a_k$ which are image labels of
vertices of $s$ occur consecutively and in proper order.  Finally, it
is clear that the chains occur in proper order.  So in the central
tile the image labels of the finite vertices are in proper cyclic
order.
\medskip

\noindent\textbf{Stage 2.} For the second stage, we add subtiles
corresponding to the critical vertices in $C'$ that aren't in
$\widetilde{C}$. We do this recursively. Each time we add subtiles
because of an element $c$ of $C' \setminus \widetilde{C}$, we add this
element to $\widetilde{C}$.  Since $C'$ is finite, this process will
terminate.  Suppose $c\in C' \setminus \widetilde{C}$. Let $k =
\deg(c)$.  If $c\in P'$, then we add $k-1$ edges from $c$ to the
$\infty$-vertex of $t$ before $c$, and we add $c$ to $\widetilde{C}$.
Image labels of finite vertices of all tiles remain in proper cyclic
order.  Now suppose $c\not\in P'$, but that there is another element
$c'\in C' \setminus \widetilde{C}$ with $c'\not\in P'$ and
$\tau(c')\ne \tau(c)$.  Let $k = \deg(c)$ and let $k' = \deg(c')$.
Let $i,j\in \{1,\dots,n\}$ such that $\tau(c) = a_i$ and $\tau(c') =
a_j$.  Choose a subtile $s$ of $t$. If $s$ doesn't contain a vertex
with image label $a_i$, then there is a unique $\infty$-vertex in $s$
such that we can add a $k$-doodle with head and tail this
$\infty$-vertex and with image label $a_i$ and still have the image
labels be in cyclic order.  We do this, and we add $c$ to
$\widetilde{C}$.  Image labels of finite vertices of all tiles remain
in proper cyclic order.  Suppose $s$ does contain a vertex $v$ with
image label $a_i$. Then we add a $k'$-doodle to $s$ with head the
$\infty$-vertex after $v$, with central vertex with image label $a_j$,
and with tail the $\infty$-vertex before $v$. We then add a $k$-doodle
to $s$ with head the $\infty$-vertex after $v$, with central vertex
with image label $a_i$, and with tail the $\infty$-vertex before
$v$. There are a new subtile in $s$ with the same image labels (and in
the same cyclic order) as for $s$, some $2$-gons if $k>2$ or $k' > 2$,
and two $4$-gons with a vertex labeled $a_i$ and a vertex labeled
$a_j$. So we still have the image labels of the finite vertices of all
of the subtiles in cyclic order. Finally, we add $c$ and $c'$ to
$\widetilde{C}$.

To complete stage $2$, we need to consider the case that $C'
\setminus \widetilde{C} \ne \emptyset$ and that all elements of
$C' \setminus \widetilde{C}$ have the same image $a_i$ under $\tau$. 
Choose an element $c\in C' \setminus \widetilde{C}$, 
and let $k = \deg(c)$. Let
$r = \#( C' \setminus \widetilde{C} )$ and let $m$ be the sum of the
degrees of the elements of $C' \setminus \widetilde{C}$. 
At every step of the construction thus far, the number of subtiles of $t$ 
increases by $\deg(v)-1$, where $v$ is the vertex added to $\widetilde{C}$.  
So the number of subtiles of $t$ created thus far is
\begin{eqnarray*}
1+\Sigma_{v\in\widetilde{C}}(deg(v)-1)
& = & 1+\Sigma_{v\in C'}(deg(v)-1)-m+r\\
& = & 1+d-1-m+r\\
& = & (d-m)+r.\\
\end{eqnarray*}
Because $\tau$ maps every element of $C'\setminus \widetilde{C}$ to
$a_i$, the number of subtiles that can have a vertex with image label
$a_i$ is at most $d-m$, so there is a subtile that does not have a
vertex with image label $a_i$. There is a unique $\infty$-vertex in
this subtile such that we can add a $k$-doodle with head and tail this
$\infty$-vertex and with image label $a_i$ and still have the image
labels be in cyclic order. We do this, and we add $c$ to
$\widetilde{C}$. This completes the recursive step, so we can continue
the recursion until $C' = \widetilde{C}$. This completes the second
stage. At this point there are $d$ subtiles of $t$, and in each
subtile the image labels of the finite vertices are in proper cyclic
order.
\medskip

\noindent\textbf{Stage 3.}  Suppose $s$ is a subtile of the
construction after stage two, and $i\in \{1,\dots,n\}$. If $s$ doesn't
have a vertex with image label $a_i$, then there is a unique
$\infty$-vertex of $s$ to which we can add a sticker whose other
vertex has image label $a_i$ and still have the image labels of the
finite vertices in cyclic order.  We do this for every such $s$ and
$i$.  This completes stage $3$.
\medskip

\noindent\textbf{Completion of the construction of $\SR$.}  At this point
every subtile has $n$ finite vertices and their image labels are in
proper cyclic order.  We define this to be the subdivision $\cR(t)$,
and we define its image under the characteristic map $t\to\SR$ to be
$\cR(\SR)$. It is straightforward to define a subdivision map that
takes $\cR(\SR)$ to $\SR$, which takes each open cell homeomorphically
to an open cell, takes each $\infty$-vertex to $\infty$, and takes a
vertex with image label $a_i$ to the vertex $a_i$.  This completes the
definition of the finite subdivision rule $\SR$.  It is clear from
this construction that the ramification portrait of the subdivision
map $\sigma_\cR$ is isomorphic to $\Gamma$.  
\medskip

\noindent\textbf{Verification that $\sigma_\cR$ has no Levy cycle.}
We prove by contradiction that $\subm$ cannot have a Levy cycle.
Suppose $\{\delta_0,\dots,\delta_{k-1},\delta_k=\delta_0\}$ is a Levy
cycle for $\subm$.  Choose any $i\in \{1,\dots,k\}$, and consider the
component $D_i$ of $S^2 \setminus \delta_i$ that does not contain
$\infty$. Since $\delta_i$ is essential and is not peripheral, $D_i$
must contain at least two points of $P$.  We will obtain a
contradiction by showing that $D_i$ can contain at most one point of
$P$.  For this, consider a finite postcritical point $a_j$ of $\subm$
in $\SR$.

Suppose that there exists a positive integer $m$ such that
$\tau^{\circ m}(a_j)$ is a critical vertex.  Then $a_j$ and $\infty$
are joined by a new edge in $\cR^{m+1}(\SR)$.  By Lemma
\ref{lem:Levyedge} we can isotop $\delta_i$ in $S^2\setminus P_f$ to
be disjoint from this new edge, so $a_j$ and $\infty$ are in the same
component of $S^2 \setminus \delta_i$.  Hence $a_j\notin D_i$.  So
$a_j\notin D_i$ if either $a_j$ is in a type-$2$ chain or we are in
the situation of the left half of Figure \ref{fig:type1cycle}.

Now suppose that $a_j$ is in a type-$1$ chain that is not followed by
a type-$2$ chain. Let $a_r$ be the last element of the type-$1$ chain
that contains $a_j$. We are in the situation of either
Figure~\ref{fig:type1type1} or the right half of
Figure~\ref{fig:type1cycle}.  So there is a pair of new edges of
$\cR(\SR)$ that bounds an open disk $D$ that contains $a_r$ and no
other postcritical points.  Hence $a_r$ cannot be in the open disk
$D_i$ since if so we can isotop $D_i$ rel $P_f$ into $D$.  Similarly,
in $\cR^{r-j+1}(\SR)$ there is a pair of new edges that bounds an open
disk that contains $a_j$ and no other postcritical points.

Finally, suppose $a_j$ is in a type-$1$ chain which is followed by a
type-$2$ chain.  This is the only remaining possibility.  Let $a_r$ be
the last element of the type-$1$ chain containing $a_j$.  Suppose that
$j\ne r$.  Let $u$ be the vertex of $t$ with label $a_j$.  Let $t'$ be
the tile of $\cR^{r-j}(t)$ which contains $u$.  Then the label of $u$
relative to $t'$ is $a_r$, that is, the structure map of $t'$ from
$t'$ to $S_\cR$ maps $u$ to $a_r$.  Let $v$ be the finite vertex of
$t'$ following $u$.  The label of $v$ relative to $t'$ is $a_{r+1}$.
Because (i) $j\ne r$, (ii) $a_r$ is in a type-1 chain and (iii)
$a_{r+1}$ is in a type-2 chain, the definition of chains implies that
$v$ is not the finite vertex of $t$ following $u$.  So the edge $e_1$
of $t'$ joining $v$ and the $\infty $-vertex of $t'$ following $u$
must be a new edge.  But, as in Figure \ref{fig:type1type2}, there is
a new edge $e_2$ in $\cR(t')$ joining $v$ and the $\infty$-vertex of
$t'$ preceding $u$.  As before, the two new edges in
$\cR^{r-j+1}(\SR)$ corresponding to $e_1$ and $e_2$ bound an open disk
which contains $a_j$ and no other postcritical point.  We have reduced
to the case in which $j=r$.  Chains are defined so that at most one
postcritical point has this property.  So the open disk $D_i$ can
contain at most one postcritical point, which contradicts the
assumption that $\delta_i$ is an element of a Levy cycle.

Since $\sigma_\cR$ is a Thurston map whose ramification portrait is
isomorphic to $\Gamma$ and $\sigma_\cR$ has no Levy cycle, the proof
of Theorem 1 is complete.
\end{proof}

\section{Completely unobstructed portraits}\label{sec:unobstructed}

\begin{proof}[Proof of Theorem \ref{thm:unobstructed}] Suppose
$\Gamma$ is an abstract polynomial portrait of degree $d$ that has at
least four vertices and satisfies condition (i) or (ii) of the
statement of the theorem.  Suppose $f$ is a Thurston map with portrait
isomorphic to $\Gamma$, and with $\infty$ a fixed critical point such
that $\deg_f(\infty) = d$.  We prove by contradiction that $f$ is
unobstructed.

Suppose $f$ is obstructed. Then $f$ has a Levy cycle, and (in the
terminology of \cite[Section 10.3]{Hub}) $f$ has a degenerate Levy
cycle $\{\delta_0,\dots,\delta_{n-1},\delta_n = \delta_0\}$.  This
means the following.  For each $i\in \{0,\dots,n\}$, let $D_i$ be the
disk bounded by $\delta_i$ in the 2-sphere such that $\infty\not\in
D_i$.  For each $i\in \{1,\dots,n\}$, one component of $f^{-1}(D_i)$
is a disk $D_{i-1}'$ such that
\begin{enumerate}
\item[(a)] $D_i\cap D_j=\emptyset $ if $i\ne j\in \{1,\dotsc,n\}$
\item[(b)] the boundary of $D_{i-1}'$ is isotopic to $\delta_{i-1}$ rel $P_f$, 
\item[(c)] $D_{i-1}' \cap P_f = D_{i-1}\cap P_f$, and
\item[(d)] $f|\co D_{i-1}' \to D_i$ is a homeomorphism.
\end{enumerate}

A key point for the Levy-Berstein theorem is that a postcritical point
in one of the $D_i$'s cannot be a critical point, because that would
violate d).  But it also cannot have an iterate that is a critical
point, because that would imply that some $D_j$ contains a critical
point.  Since each $D_i$ must contain at least two postcritical
points, there must be at least two postcritical points in
non-attractor cycles. This gives the contradiction for case (i).

Now suppose (ii) holds.  Then for some prime number $p$ and positive
integer $k$, we can enumerate the finite postcritical points of $f$ as
$\{v_i: 0\leq i < p^k\}$ such that $f(v_i) = v_{i+1}$ (mod $p^k$) for
every $i\in \{0,\dots,p^k -1\}$, and if $v_j$ is a critical value then
$j$ is a multiple of $p^{k-1}$.  Since the sets $P_f\cap D_i$
partition the set of finite postcritical points and they all have the
same cardinality, there is a positive integer $m$ such that $\#(P_f
\cap D_i) = p^m$ for all $i$.  Then $np^m = p^k$ and $n = p^r$, where
$r = k-m$. For some $i\in \{1,\dots,n\}$, $v_0\in D_i$.  Then
$\{v_{jp^r}\co 0\le j < p^m\} \subset D_i$ and so $\{v_{jp^{k-1}}\co
0\le j < p\} \subset D_i$.  Thus $D_i$ contains every finite critical
value of $f$.

Let $D$ be the disk bounded by $\delta_i$ that contains $\infty$, and
let $\widetilde{D} = f^{-1}(D)$.  Then $D$ doesn't contain any finite
critical values of $f$.  It follows that the restriction of $f$ to
$\widetilde{D}\setminus \{\infty \}$ is a covering map onto
$D\setminus \{\infty \}$.  But every connected covering space of a
once-punctured disk is a once-punctured disk.  Since $f$ is $d$-to-1
near $\infty $, the space $\widetilde{D}\setminus \{\infty \}$ is a
once-punctured disk which maps by $f$ to $D\setminus \{\infty \}$ with
degree $d$.  Hence $\partial \widetilde{D}=f^{-1}(\partial
D)=f^{-1}(\delta_i)$.  This contradicts the assumption that
$f^{-1}(\delta_i)$ has a connected component which maps to $\delta_i$
with degree 1.  Thus $f$ is unobstructed.
\end{proof}

\section{Rose maps}\label{sec:rose}

To prove Theorem \ref{thm:obstructed}, we need a topological
description for topological polynomials which may not be subdivision
maps.  We begin this section by discussing our approach to this.  We
define a \emph{rose} to be the boundary of the union of finitely many
closed topological disks in the 2-sphere which are disjoint except for
having exactly one point in common.  We view a rose as a graph with
exactly one vertex.  Its edges are called \emph{petals}.  

Let $S_1^2$ and $S_2^2$ be two copies of $S^2$, and suppose that we
have a finite branched covering map $g\co S_1^2\to S_2^2$ whose
critical values lie in a finite set $P\subseteq S_2^2$.  The
restriction of $g$ to $S_1^2\setminus g^{-1}(P)$ is a covering map
from $S_1^2\setminus g^{-1}(P)$ onto $S_2^2\setminus P$.  In the
context of covering maps, a straightforward thing to do in this
situation is to use the fact that $S_2^2\setminus P$ is homotopic to a
rose with $\#(P)-1$ petals---the fundamental group of $S_2^2\setminus
P$ is a free group on $\#(P)-1$ generators.  Let $R_2$ be a rose in
$S_2\setminus P$ which is a spine, and let $R_1=g^{-1}(R_2)$.  Because
every connected component of $S_2^2\setminus R_2$ contains at most one
branch value of $g$, every connected component of $S_1^2\setminus R_1$
is a disk, equivalently, $R_1$ is connected.  The restriction of $g$
to $R_1$ is a covering map onto $R_2$, and this restriction determines
$g$ up to homotopy.

With this in mind, we construct finite branched covering maps $g\co
S_1^2\to S_2^2$ as follows.  Let $R_2\subseteq S_2^2$ be a rose with
$n\ge 1$ petals.  We orient $S_2^2$ and label $n$ connected components
of $S_2^2\setminus R_2$ each bounded by one petal with $1,\dotsc,n$ in
counterclockwise order.  We label the remaining connected component of
$S_2^2\setminus R_2$ with $\infty $.  Suppose that we have a finite
connected graph $R_1\subseteq S_1^2$ whose vertices have small
neighborhoods which look like small neighborhoods of the vertex of
$R_2$.  Here is what this means.  The connected components of
$S_1^2\setminus R_1$ are labeled with $1,\dotsc,n$ (duplications
allowed) and $\infty $.  We choose a barycenter for every edge of
$R_1$ and call the resulting edges half edges.  These barycenters are
not vertices of $R_1$.  Let $v$ be a vertex of $R_1$.  Then $2n$ half
edges contain $v$.  After orienting $S_1^2$, they can be written as
$\epsilon_1,\dotsc,\epsilon_{2n}$ in counterclockwise order around $v$
so that $\epsilon_{2i-1}$ and $\epsilon_{2i}$ are in the boundary of a
connected component of $S_1\setminus R_1$ with label $i$ for every
$i\in\{1,\ldots,n\}$.  Furthermore $\epsilon_{2i}$ and
$\epsilon_{2i+1}$ are in the boundary of a connected component of
$S_1^2\setminus R_1$ with label $\infty $ for every
$i\in\{1,\ldots,n\}$, where $\epsilon_{2n+1}=\epsilon_1$.  It is a
straightforward matter, by mapping vertices, then half edges and then
disks, to construct a finite branched covering map $g\co S_1^2\to
S_2^2$ such that $g|_{R_1}$ is a covering map onto $R_2$ which maps
vertices to vertices and edges to edges.  This can be done so that $g$
has at most one critical point in every connected component of
$S_1^2\setminus R_1$.

We define a rose map to be a map of pairs $g\co (S_1^2,R_1)\to
(S_2^2,R_2)$, where $S_1^2$ and $S_2^2$ are two oriented copies of
$S^2$, $g\co S_1^2\to S_2^2$ is an orientation-preserving finite
branched covering map, $R_2\subseteq S_2^2$ is a rose, $R_1=
g^{-1}(R_2)$ is a graph with pullback graph structure and every
connected component of $S_2^2\setminus R_2$ contains at most one
critical value of $g$.  The next lemma guarantees the existence of the
rose maps that we will use for the proof of Theorem
\ref{thm:obstructed}.  We will precompose a rose map $g\co S_1^2\to
S_2^2$ with a homeomorphism $h\co S_2^2\to S_1^2$ to obtain a desired
topological polynomial $f=g\circ h$.  In the lemma, the connected
components of $S_2^2\setminus R_2$ are labeled, which induces a
labeling of the connected components of $S_1^2\setminus R_1$, which
induces a labeling of the vertices of the graph dual to $R_1$.

\begin{lemma}\label{lem:cmbllemma} Suppose that $\Gamma$ is an
abstract polynomial portrait whose finite postcritical vertices are
$v_1,\dots,v_n$.  Let $u$ be a finite critical value of $\Gamma$ with
the maximum number of incoming edges from critical vertices.  Let $v$
be any finite critical value of $\Gamma$.  Then there exists a rose
map $g\co (S_1^2,R_1)\to (S_2^2,R_2)$ realizing the branch data of
$\Gamma$ such that $n$ connected components of $S_2^2\setminus R_2$
each bounded by a petal of $R_2$ are labeled $v_1,\dots,v_n$ in
counterclockwise order, the remaining connected component is labeled
$\infty$ and $R_1$ has a dual graph $R_1^*$ for which the following
statements hold.
\begin{enumerate}
  \item The boundary of one connected component of $S_1^2\setminus
R_1^*$ contains exactly two critical points: one vertex with label $u$
and one vertex with label $\infty $.
  \item If $u$ has an incoming edge from a noncritical vertex, then
the boundary of one connected component of $S_1^2\setminus R_1^*$
contains exactly two critical points: one
vertex with label $v$ and one vertex with label $\infty $.
\end{enumerate}
\end{lemma}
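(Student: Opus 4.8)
The plan is to build the graph $R_1$ directly by prescribing, around each prospective vertex, the cyclic arrangement of half-edges and the labels of the adjacent complementary regions, in the manner described just before the lemma; the branched cover $g$ is then assembled vertex-by-vertex, then half-edge-by-half-edge, then region-by-region, and the only real freedom we need to exploit is the cyclic placement of the preimages of the petal-regions around each vertex of $R_1$. Concretely, label the complementary regions of $R_2$ by $v_1,\dots,v_n,\infty$ as in the statement. A vertex of $R_1$ lying over the vertex of $R_2$ with local degree $m$ is surrounded by $m$ copies of the cyclic word $(1,\infty,2,\infty,\dots,n,\infty)$; choosing such a vertex amounts to choosing a cyclic sequence of region-labels of length $mn$ that is a concatenation of $m$ rotations of $(v_1,\dots,v_n)$ interleaved with $\infty$'s. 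The sum of the local degrees of the vertices over the vertex of $R_2$ equals $d$, and the global incidence pattern is forced by the branch data of $\Gamma$: for each finite postcritical vertex $v_i$ of $\Gamma$, the region of $S_1^2\setminus R_1$ labeled $v_i$ that contains a critical point of local degree $\deg(c)$ corresponds to the edge of $\Gamma$ from $c$ to $v_i=\tau(c)$, and the multiplicities with which the $v_i$-region appears, counted with local degree, must total $d$ — which is exactly the Riemann–Hurwitz/local-degree condition built into the definition of an abstract portrait. So the combinatorial data is realizable as \emph{some} rose map; the content of the lemma is that we can arrange it so that the dual graph $R_1^*$ has the two prescribed faces.

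For part (1): let $u$ be the finite critical value with the most incoming edges from critical vertices. A face of $S_1^2\setminus R_1^*$ corresponds to a vertex of $R_1$, and the critical points on its boundary are the complementary regions of $S_1^2\setminus R_1$ meeting that vertex that carry a nontrivial local degree. I want a vertex $w$ of $R_1$ over the vertex of $R_2$ such that exactly two of the regions around $w$ are critical: one labeled $u$ and one labeled $\infty$. This means $w$ should have local degree $m\ge 2$ (so that the $\infty$-region at $w$ is critical of degree $m$), and among the $m$ copies of $u$-regions around $w$ exactly one should contain a critical point (of $\Gamma$-degree equal to the degree of some critical vertex $c$ with $\tau(c)=u$), while the other $m-1$ copies of the $u$-region around $w$ and all copies of every other $v_i$-region around $w$ should be noncritical. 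Since $u$ has at least one incoming edge from a critical vertex $c$, I can dedicate one vertex $w$ of $R_1$ to "absorbing" the ramification at the $\infty$-fiber together with exactly that one critical point over $u$: give $w$ local degree equal to the $\Gamma$-degree of $c$ at the $\infty$-vertex... more carefully, the $\infty$-vertex of $R_1^*$ adjacent to $w$ is itself one of the $d$ points over the $\infty$-region of $R_2$, and the remaining ramification over $\infty$ is distributed among other vertices; this is possible precisely because the $\infty$-vertex of $\Gamma$ is fully ramified and fixed, so there is only the single constraint that the local degrees over $\infty$ sum to $d$, which we are free to split as $\deg(c) + (\text{rest})$. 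The remaining branch data over the other finite critical values is then realized on the complement, using the same vertex-by-vertex bookkeeping; the only thing to check is that removing this one "gadget" vertex from the budget still leaves a consistent incidence pattern for the rest, which again follows from the additivity of local degrees and the portrait axioms.

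For part (2): if $u$ additionally has an incoming edge from a \emph{noncritical} vertex, and $v$ is any finite critical value, I want a second distinguished vertex $w'$ of $R_1$ whose boundary meets exactly two critical regions, one labeled $v$ and one labeled $\infty$. The construction is the same as for $w$, applied to $v$ in place of $u$: pick a critical vertex $c'$ of $\Gamma$ with $\tau(c')=v$, give $w'$ local degree $\deg(c')$ so that its $\infty$-region is critical, and arrange that exactly one of its $v$-regions contains the critical point coming from $c'$. The role of the hypothesis "$u$ has an incoming edge from a noncritical vertex" is to guarantee there is enough slack to build \emph{both} gadget vertices simultaneously: the noncritical preimage of $u$ gives an $\infty$–$u$ incidence at a degree-$1$ vertex that lets the $u$-gadget vertex $w$ of part (1) be built without consuming the critical vertex over $u$ that we might otherwise need elsewhere, and symmetrically frees up a critical vertex over $v$ for $w'$. (If $v=u$, this uses that $u$ then has at least two incoming edges — one critical, one not — and the "maximum incoming edges from critical vertices" choice of $u$ is what makes this available.)

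The main obstacle I expect is the global bookkeeping in part (1)–(2): one must verify that after reserving the gadget vertices $w$ (and $w'$), the \emph{rest} of the branch data of $\Gamma$ over all finite critical values still admits a realization by a connected rose preimage with the correct cyclic structure at every vertex — i.e., that the "greedy" allocation doesn't create a parity or connectivity obstruction. Connectivity of $R_1$ is automatic from the hypothesis that each complementary region of $R_2$ contains at most one critical value (every component of $S_1^2\setminus R_1$ is then a disk). The genuinely delicate point is showing that the residual incidence multiset — for each $v_i$, how many times (with which local degrees) the $v_i$-region must appear around the remaining vertices so that the local-degree sums over each fiber come out to $d$ — is always fillable; this is where one invokes, region by region, that $\sum_{c:\tau(c)=v_i}(\deg(c)-1)$ plus the count of noncritical preimages equals $d$, which is the defining Riemann–Hurwitz condition of an abstract portrait, together with the fact that the $\infty$-fiber is fully ramified and so imposes no side constraint beyond a sum.
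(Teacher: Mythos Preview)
Your proposal rests on a misreading of the rose-map model, and the gap is structural rather than cosmetic.

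In the setup preceding the lemma, the restriction $g|_{R_1}\co R_1\to R_2$ is an \emph{unramified} graph covering: every vertex of $R_1$ has a neighborhood that looks exactly like the vertex of $R_2$, i.e.\ has valence $2n$ with the single cyclic word $(v_1,\infty,v_2,\infty,\dots,v_n,\infty)$ around it. There are $d$ such vertices, each of local degree $1$. Your ``vertex $w$ of $R_1$ with local degree $m\ge 2$, surrounded by $m$ copies of the cyclic word'' does not exist in this model. The critical points of $g$ live in the complementary regions of $R_1$, not at its vertices: a region of $S_1^2\setminus R_1$ with label $v_i$ bounded by $k$ edges contains a critical point of local degree $k$, and the monogons ($k=1$) are the non-critical preimages. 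Moreover, because $\Gamma$ is a polynomial portrait, there is a \emph{single} $\infty$-region in $S_1^2\setminus R_1$ (carrying the unique degree-$d$ preimage of $\infty$), not ``$d$ points over the $\infty$-region'' to distribute. That one $\infty$-region is adjacent to every vertex of $R_1$, so its presence on the boundary of any face of $R_1^*$ is automatic and cannot be ``allocated''. Once these corrections are made, the object you are trying to build in part~(1)---a vertex $w$ of $R_1$ adjacent to exactly one non-monogon finite region, labeled $u$---has nothing to do with any local degree at $w$; it is a global combinatorial constraint on how the non-monogon regions are glued together.

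The paper's proof meets this constraint by an explicit construction: it builds a chain of polygons, one polygon $P_c$ with $\deg(c)$ sides for each finite critical vertex $c$, labeled $\tau(c)$. The polygons coming from $W=C_\Gamma\setminus(U\cup\{\infty\})$ (where $U=\tau^{-1}(u)\cap C_\Gamma$) are first arranged into sub-chains $C_1,\dots,C_m$ so that no two polygons with the same label touch; then $m-1$ of the $U$-polygons are used as connectors between consecutive sub-chains, and the remaining $U$-polygons are hung off free vertices of the chain. The crucial count is the Riemann--Hurwitz consequence
\[
\sum_{w\in W}(\deg_\tau(w)-1)\;\ge\;\#(U)-1,
\]
which says the chain has enough free vertices to absorb all of $U$; statement~1 is arranged by attaching the first leftover $U$-polygon at an end of the chain (so one of its vertices touches no other polygon), and statement~2 uses that the inequality above is strict when $u$ has a noncritical preimage, leaving a free vertex on the last $W$-polygon of $C_m$, which was chosen to have label $v$. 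Your proposal never engages with this counting, and the ``greedy allocation'' you sketch cannot be carried out in the model as you describe it.
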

  \begin{proof} We will construct $R_1$ rather explicitly.

To prepare for the construction of $R_1$, let $U$ be the set of
critical vertices which $\tau$ maps to $u$, and let $W$ be the set of
remaining finite critical vertices.  So $C_\Gamma=U\amalg W\amalg
\{\infty\}$.  The Riemann-Hurwitz condition gives that
  \begin{equation*}
\sum_{w\in U}^{}(\deg_\tau(w)-1)+\sum_{w\in W}^{}(\deg_\tau(w)-1)=d-1,
  \end{equation*}
where $d$ is the degree of $\Gamma$.  So
  \begin{equation*}\linnum\label{line:rh}
\sum_{w\in W}^{}(\deg_\tau(w)-1)=d-1+\#(U)-\sum_{w\in U}^{}\deg_\tau(w)\ge
\#(U)-1.
  \end{equation*}
In particular, if $W$ is empty, then $\#(U)=1$.  In this case there is
exactly one choice for $R_1$ up to isomorphism.  Only two connected
components of $S_1^2\setminus R_1$ are not monogons, and one of
these has label $\infty$.  Statement 1 is true in this case, and
statement 2 is true since $u=v$.  So we henceforth assume that $W$ is
nonempty.

Let $m$ be the maximum number of incoming edges from critical vertices
at the critical values other than $u$.  Then it is possible to
partition $W$ into disjoint nonempty subsets $W_1,\dotsc,W_m$ so that
if $w,w'\in W_i$ for some $i$ with $w\ne w'$, then $\tau(w)\ne
\tau(w')$.  We do this so that if $u\ne v$, then there exists $w\in
W_m$ such that $\tau(w)=v$.

Now we begin to construct $R_1$.  We enumerate the elements of $W_1$,
and for every $w\in W_1$ we construct a closed (2-dimensional) polygon
$P_w\subseteq S_1^2$ with $\deg_\tau(w)$ sides whose interior has
label $\tau(w)$.  These polygons are disjoint from each other, except
that each has exactly one vertex in common with the next.  We obtain a
chain (not to be confused with the chains in Section~\ref{sec:poly})
$C_1$ of polygons.  We also construct such chains $C_2,\dotsc,C_m$ for
$W_2,\dotsc,W_m$ so that the chains $C_1,\dotsc,C_m$ are disjoint from
each other and if $u\ne v$, then the polygon with label $v$ in $C_m$
is last.

The choices of $u$ and $m$ imply that $U$ contains at least $m$
elements.  We choose $m-1$ distinct elements $u_1,\dotsc,u_{m-1}\in
U$.  For every $i\in\{1,\ldots,m-1\}$ we construct a polygon $P_{u_i}$
as before which is disjoint from the polygons already constructed and
from the other $P_{u_j}$'s,
except that one vertex of $P_{u_i}$ is a vertex of $C_i$ that is only
in the last polygon of $C_i$, and a different vertex of $P_{u_i}$ is a
vertex of $C_{i+1}$ that is only in the first tile of $C_{i+1}$.  
The polygons
$P_{u_1},\dotsc,P_{u_{m-1}}$ join the chains $C_1,\dotsc,C_m$ to form
a single chain $C$.

We intend to also construct similar polygons $P_v$ for the remaining
elements $v$ of $U$.  We intend to construct each of them in one of
two ways.  One way to construct $P_v$ is to choose a vertex of $C$
contained in only one polygon and to construct $P_v$ so that it meets
the polygons already constructed exactly in this vertex.  Here is
another way to construct $P_v$.  Choose $i\in\{1,\ldots,m\}$ and two
consecutive polygons $P$ and $P'$ in $C_i$.  Let $x$ be the vertex
common to $P$ and $P'$.  We modify $P$ and $P'$ slightly near $x$,
pulling them apart, so that they become disjoint.  We then construct
$P_v$ so that it contains both of the new vertices in $P$ and $P'$
while being otherwise disjoint from the polygons already constructed.

The only obstacle to performing the constructions described in the
previous paragraph is that $C$ might not contain enough vertices to
accommodate all the elements of $U$.  But it is not difficult to see
that the number of elements of $U$ that can be accommodated, including
$u_1,\dotsc,u_{m-1}$, is
  \begin{equation*}
1+\sum_{w\in W}(\deg_\tau(w)-1).
  \end{equation*}
Thus line~\ref{line:rh} shows that $C$ does indeed have enough
vertices to accommodate all the elements of $U$.  So we construct a
polygon $P_v$ as described in the previous paragraph for every $v\in
U\setminus \{u_1,\dotsc,u_{m-1}\}$.

Because $U$ contains at least $m$ elements, this can be done so that
  \begin{equation*}\linnum\label{line:condition1}
\parbox{.85\linewidth}{the first of these polygons meets exactly one 
polygon in $C$, the
first polygon in $C_1$.}
  \end{equation*}
Furthermore, if $u$ has an incoming edge from a noncritical vertex,
then the inequality in line~\ref{line:rh} is strict, so we may also
construct these polygons so that
  \begin{equation*}\linnum\label{line:condition2}
\parbox{.85\linewidth}{if $u$ has an incoming edge from a noncritical
vertex, then the last polygon in $C_m$ contains a vertex not in any of
these polygons.}
  \end{equation*}

Every vertex in the complex constructed thus far is contained in
either one or two polygons.  If there are two, then their labels are
different.  Hence it is possible to add monogons with labeled interiors
at each of
these vertices so that the cyclic order of labels about each vertex
agrees with the cyclic order of the labels of $R_2$.  We have $R_1$.
The discussion preceeding the lemma describes how to construct a rose map
$g\co (S_1^2,R_1) \to (S_2^2,R_2)$ from this information.
Line~\ref{line:condition1} implies statement 1 of the lemma, and
line~\ref{line:condition2} implies statement 2.

\end{proof}

\section{Obstructed portraits}\label{sec:obstructed}

\begin{proof}[Proof of Theorem \ref{thm:obstructed}] We first prove
the theorem in cases (i) and (ii).  Assume that $\Gamma$ satisfies
either (i) or (ii).  We will use the following example to illustrate
various constructions in the proof.  Consider the abstract polynomial
portrait that is shown below.

$$\xymatrix{\ast\ar[dr]^{3}&&&\ast\ar[d]^ {2}& \ast\ar[dr] ^{2}&
\ast\ar[d]^2 \\
\ast\ar[r]^2&v_{0}\ar[r] &v_{1}\ar[r] &v_{2} \ar[r] &v_{3} \ar[r] &
v_{4}\ar[d] & \infty\ar@(ur,dr)^8\\
& & v_8\ar[ul] & v_7 \ar[l] & v_6 \ar[l] & v_5 \ar[l] & \ast \ar[l]_{2}}
$$
It satisfies the conditions of case (i).

We prepare to apply Lemma \ref{lem:cmbllemma}.  Suppose that $\Gamma$
has $n$ finite postcritical vertices $v_0,\dotsc,v_n$ with $v_n=v_0$
such that $\tau(v_i)=v_{i+1}$ for every $i\in\{0,\ldots,n-1\}$.  Of
course, if $\Gamma$ satisfies condition (i), then $n=p^k$.  Choose a
finite critical value $u$ of $\Gamma$ with the maximum number of
incoming edges from critical vertices.  We redefine $v_0,\dotsc,v_n$
if necessary so that $v_0=u$ without changing the assumptions.

In particular, if $\Gamma$ satisfies (i), then there exists a critical
value $v_i\in \{v_0,\dotsc,v_{p^k-1}\}$ such that $i$ is not a
multiple of $p^{k-1}$.  We set $v=v_i$.  In our example, we take
$u=v_0$ and $v=v_2$.

To define $v$ in the case of condition (ii), suppose that $\Gamma$
satisfies (ii).  Because $u$ is in a non-attractor cycle, $\tau$ maps
some vertex which is not critical to $u$.  It follows that the
inequality in line \ref{line:rh} is strict, and so $\Gamma$ has more
than one finite critical value.  Let $v$ be any finite critical value
other than $u$.

We next define a positive integer $m$.  If $\Gamma$ satisfies (i),
then we set $m=p^{k-1}$.  Suppose that $\Gamma$ satisfies (ii).  Let
$i$ be the index such that $v=v_i$.  Because $n$ is not a prime power,
it is the product of two relatively prime proper divisors.  Because
they are relatively prime, if both of these two proper divisors divide
$i$, then $n$ divides $i$, which is not true.  Hence some positive
proper divisor of $n$ does not divide $i$.  Let $m$ be such a positive
proper divisor of $n$.

So in either case (i) or (ii), $m$ is a positive proper divisor of $n$
such that $v=v_i$ and $m$ does not divide $i$.

Lemma \ref{lem:cmbllemma} implies that there exists a rose map $g\co
(S_1^2,R_1)\to (S_2^2,R_2)$ realizing the branch data of $\Gamma$ such
that $n$ connected components of $S_2^2\setminus R_2$ each bounded by
a petal of $R_2$ are labeled $v_0,\dotsc,v_{n-1}$ in counterclockwise
order, the remaining connected component is labeled $\infty $ and
$R_1$ has a dual graph $R_1^*$ for which
\begin{enumerate}
  \item the boundary of one connected component $C_u$ of
$S_1^2\setminus R_1^*$ contains exactly two critical points: one
vertex with label $u$ and one vertex with label $\infty $;
  \item the boundary of one connected component $C_v$ of $S_1^2\setminus
R_1^*$ contains exactly two critical points: one vertex with label $v$
and one vertex with label $\infty $.
\end{enumerate}
By modifying $R_1^*$ if necessary, we may assume that the restriction
of $g$ to both $C_u$ and $C_v$ is injective.  We identify every
postcritical vertex $w$ of $\Gamma$ with a point in the connected
component of $S_2^2\setminus R_2$ with label $w$.  These serve as the
vertices of a graph $R_2^*$ dual to $R_2$.  Their $g$-pullbacks serve
as the vertices of $R_1^*$.  Figure \ref{fig:rosep3} depicts important
features of the rose map $g$ for our example.

\begin{figure}[!ht]
\labellist
\small\hair 2pt
\pinlabel $v_0$ at 301 59
\pinlabel $v_1$ at 289 53
\pinlabel $v_2$ at 282 36
\pinlabel $v_3$ at 287 20
\pinlabel $v_4$ at 301 11
\pinlabel $v_5$ at 319 14
\pinlabel $v_6$ at 328 28
\pinlabel $v_7$ at 327 44
\pinlabel $v_8$ at 319 57
\pinlabel $v_1$ at 75 158
\pinlabel $v_2$ at 62 167
\pinlabel $v_3$ at 44 163
\pinlabel $v_4$ at 35 151
\pinlabel $v_5$ at 34 136
\pinlabel $v_6$ at 44 122
\pinlabel $v_7$ at 60 120
\pinlabel $v_8$ at 75 128
\pinlabel $v_0$ at 92 140
\pinlabel $v_0$ at 199 168
\pinlabel $v_1$ at 184 158
\pinlabel $v_2$ at 177 143
\pinlabel $v_3$ at 185 127
\pinlabel $v_4$ at 200 119
\pinlabel $v_5$ at 216 122
\pinlabel $v_6$ at 224 135
\pinlabel $v_7$ at 226 152
\pinlabel $v_8$ at 215 164
\pinlabel $\infty$ at 128 72
\pinlabel $S^{2}_{1}$ at 20 84
\pinlabel $S^{2}_{2}$ at 240 0
\pinlabel \rotatebox{-45}{$\underrightarrow{g}$} at 250 87
\endlabellist
\centering
\includegraphics{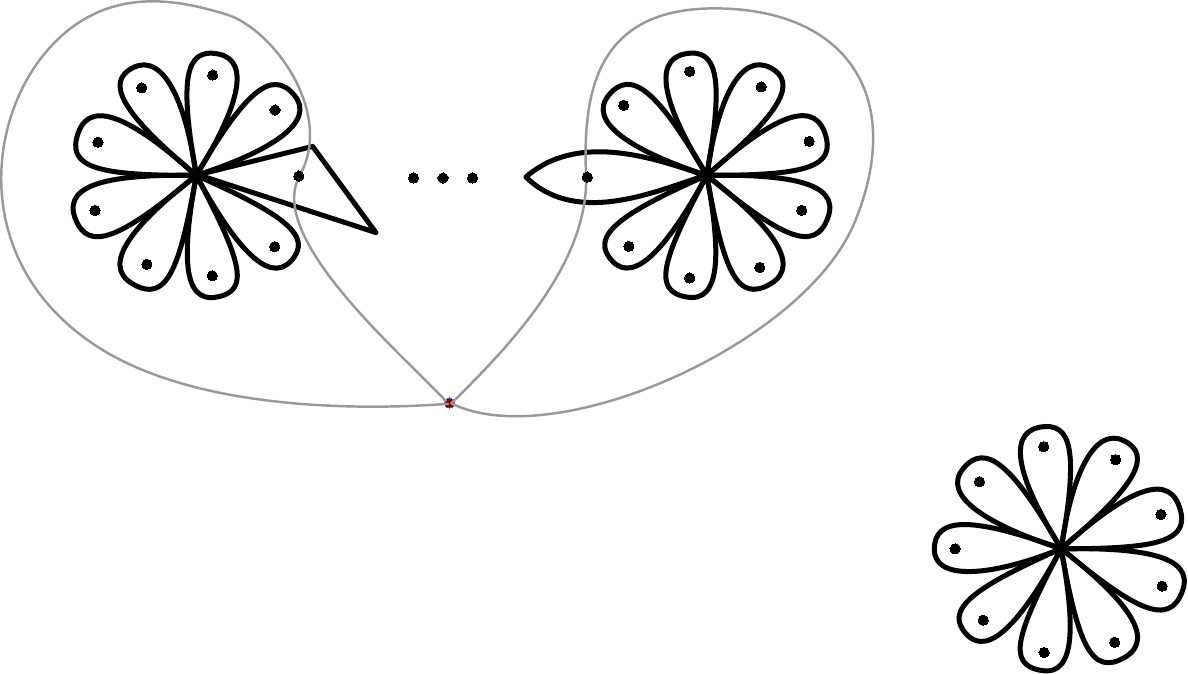}
\caption{Rose map $g:S_{1}^2\to S_{2}^2$}
\label{fig:rosep3}
\end{figure}

Now we choose $m$ disjoint closed topological disks $D_i\subseteq
S_2^2\setminus \{\infty \}$ such that i) $v_j$ is in the interior of
$D_i$ if $j\equiv i\text{ (mod }m)$ for $i\in \{0,\dotsc,m-1\}$ and
$j\in \{0,\dotsc,n-1\}$, ii) $D_0$ is in the open disk $g(C_v)$, and
iii) for $i\in \{1,\dotsc,m-1\}$ $D_i$ is in the open disk
$g(C_u)$.  The restriction of $g$ to $C_v$ is a
homeomorphism, so there exists a unique lift $\widetilde{D}_0$ of
$D_0$ to $C_v$.  We denote the lift of $v_j$ to $C_v$ by
$\widetilde{v}_j$ for every index $j\equiv 0\text{ (mod } m)$.  There
likewise exist unique lifts $\widetilde{D}_i$ of $D_i$ to $C_u$ for
every $i\in \{1,\dotsc,m-1\}$.  We denote the lift of $v_j$ to $C_u$
by $\widetilde{v}_j$ for every index $j\not\equiv 0\text{ (mod }m)$.
Figure \ref{fig:rosep4}  shows all of these points and disks for our example.

\begin{figure}[!ht]
\labellist
\small\hair 2pt
\pinlabel $v_0$ at 300 66
\pinlabel $v_1$ at 286 58
\pinlabel $v_2$ at 282 43
\pinlabel $v_3$ at 285 25
\pinlabel $v_4$ at 300 16
\pinlabel $v_5$ at 319 19
\pinlabel $v_6$ at 330 35
\pinlabel $v_7$ at 330 54
\pinlabel $v_8$ at 319 62
\pinlabel $\widetilde{v}_1$ at 75 165
\pinlabel $\widetilde{v}_2$ at 61 170
\pinlabel $v_3$ at 44 168
\pinlabel $\widetilde{v}_4$ at 35 156
\pinlabel $\widetilde{v}_5$ at 35 141
\pinlabel $v_6$ at 44 127
\pinlabel $\widetilde{v}_7$ at 61 126
\pinlabel $\widetilde{v}_8$ at 72 135
\pinlabel $v_0$ at 95 148
\pinlabel $\widetilde{v}_0$ at 199 171
\pinlabel $v_1$ at 180 164
\pinlabel $v_2$ at 176 147
\pinlabel $\widetilde{v}_3$ at 187 134
\pinlabel $v_4$ at 199 124
\pinlabel $v_5$ at 217 127
\pinlabel $\widetilde{v}_6$ at 225 141
\pinlabel $v_7$ at 226 158
\pinlabel $v_8$ at 216 170
\pinlabel $\infty$ at 128 80
\pinlabel $S^{2}_{1}$ at 20 89
\pinlabel $S^{2}_{2}$ at 240 5
\pinlabel $C_u$ at 85 100
\pinlabel $C_v$ at 175 100
\pinlabel \rotatebox{-45}{$\underrightarrow{g}$} at 235 92
\endlabellist
\centering
\includegraphics{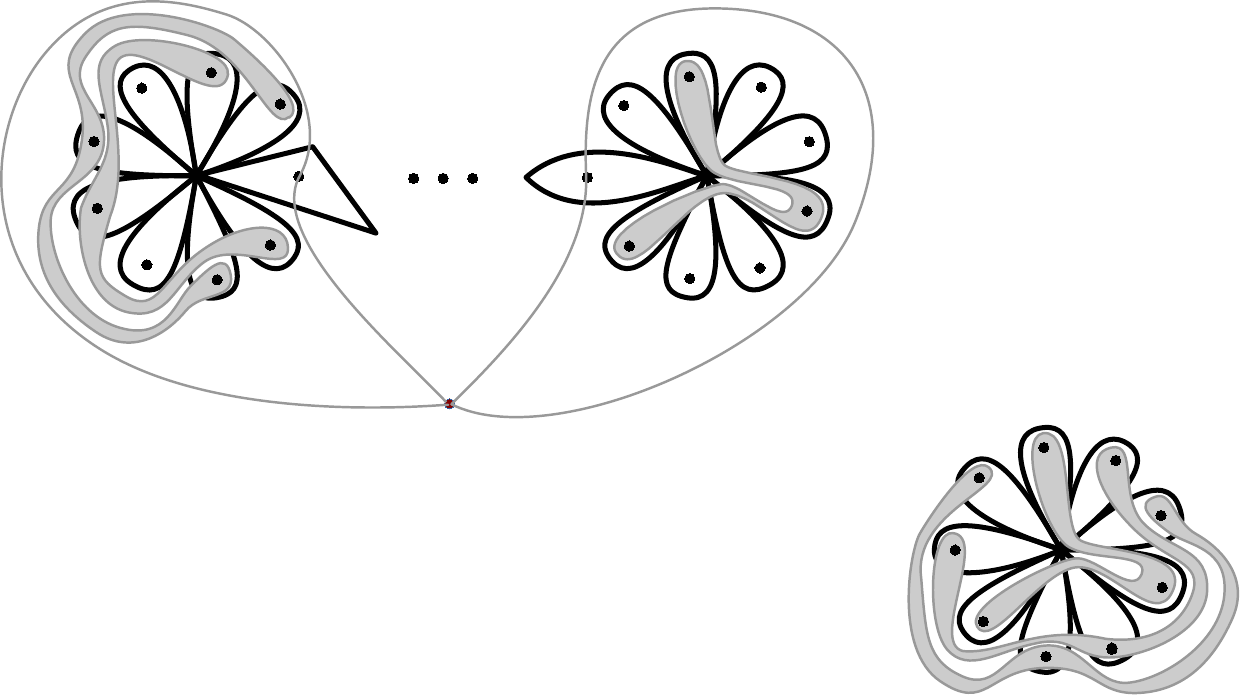}
\caption{Rose map $g:S_{1}^2\to S_{2}^2$}
\label{fig:rosep4}
\end{figure}

We next construct an orientation-preserving homeomorphism $h\co
S_2^2\to S_1^2$ such that (i) $h(v_j)=\widetilde{v}_{j+1}$ for $j\in
\{0,\dotsc,n-1\}$, (ii) $h(\infty )=g^{-1}(\infty )$ and (iii)
$h(D_i)=\widetilde{D}_{i+1}$ for $i\in \{0,\dotsc,m-1\}$, where
$D_m=D_0$.  See Figure \ref{fig:rosep6}.

\begin{figure}[!ht]
\labellist
\small\hair 2pt
\pinlabel $v_1$ at 26 60
\pinlabel $v_2$ at 22 45
\pinlabel $v_3$ at 9 25
\pinlabel $v_4$ at 38 6
\pinlabel $v_5$ at 61 9
\pinlabel $v_6$ at 66 36
\pinlabel $v_7$ at 67 51
\pinlabel $v_8$ at 62 81
\pinlabel $v_0$ at 41 85
\pinlabel $v_1$ at 236 60
\pinlabel $v_2$ at 232 45
\pinlabel $v_3$ at 219 25
\pinlabel $v_4$ at 248 6
\pinlabel $v_5$ at 271 9
\pinlabel $v_6$ at 276 36
\pinlabel $v_7$ at 277 51
\pinlabel $v_8$ at 272 81
\pinlabel $v_0$ at 251 85
\pinlabel $\widetilde{v_1}$ at 136 164
\pinlabel $\widetilde{v_2}$ at 118 174
\pinlabel $\widetilde{v_0}$ at 186 180
\pinlabel $\widetilde{v_4}$ at 65 161
\pinlabel $\widetilde{v_5}$ at 90 143
\pinlabel $\widetilde{v_6}$ at 239 142
\pinlabel $\widetilde{v_7}$ at 124 123
\pinlabel $\widetilde{v_3}$ at 186 126
\pinlabel $\widetilde{v_8}$ at 141 132
\pinlabel \rotatebox{315}{$\underrightarrow{g}$} at 222 106
\pinlabel \rotatebox{45}{$\underrightarrow{h}$} at 68 96
\pinlabel $\underrightarrow{f}$ at 156 40
\pinlabel $S^{2}_{1}$ at 146 104
\pinlabel $S^{2}_{2}$ at 1 0
\pinlabel $S^{2}_{2}$ at 306 0
\endlabellist
\centering
\includegraphics{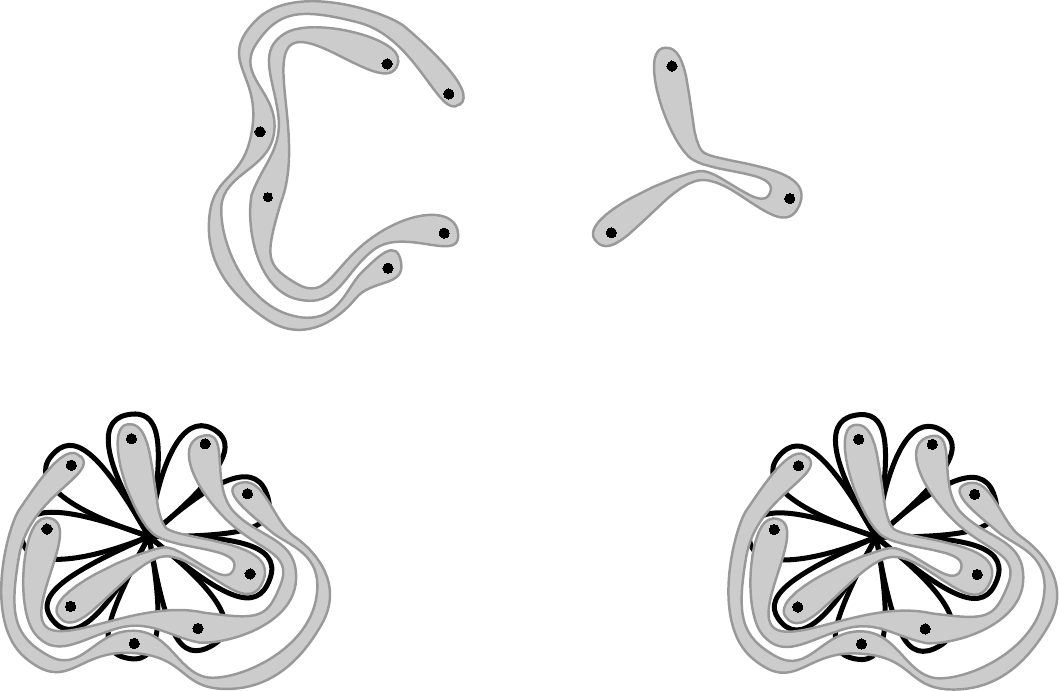}
\caption{The obstructed Thurston map $f:S^{2}_{2}\to S^{2}_{2}$}
\label{fig:rosep6}
\end{figure}

Finally, define $f:=g\circ h$.  The map $f$ is a Thurston map whose
portrait is isomorphic to $\Gamma$ and the boundaries of the disks
$D_i$ form a degenerate Levy cycle.  This proves Theorem
\ref{thm:obstructed} in cases (i) and (ii).

Now suppose that $\Gamma$ satisfies (iii).  Let $C$ be a non-attractor
cycle of length $\ell \ge 2$ that does not contain all of the finite
critical values of $\Gamma$.  Let $v_1,\dotsc,v_n$ be the finite
postcritical vertices of $\Gamma$ indexed so that $v_1,\dotsc,v_\ell $
are the vertices of $C$, $\tau(v_\ell)=v_1$ and $\tau(v_i)=v_{i+1}$
for $i\in\{1,\ldots,\ell-1\}$.  By Lemma \ref{lem:cmbllemma} there
exists a critical value $v\notin C$ and a rose map $g\co (S_1^2,
R_1)\to (S_2^2,R_2)$ realizing the branch data of $\Gamma$ such that
$n$ connected components of $S_2^2\setminus R_2$ each bounded by a
petal of $R_2$ are labeled $v_1,\dotsc,v_n$ in counterclockwise order,
the remaining connected component is labeled $\infty $ and the
boundary of one connected component $C_v$ of $S_1\setminus R_1^*$
contains exactly two critical points: one vertex with label $v$ and
one vertex with label $\infty$.  We may assume that the restriction of
$g$ to $C_v$ is injective.  As in cases (i) and (ii), we identify
every postcritical vertex $w$ of $\Gamma$ with a point in the
connected component of $S_2^2\setminus R_2$ with label $w$.

Let $D$ be a closed topological disk in the open disk $g(C_v)$ whose interior
contains $v_1,\dotsc,v_\ell $ but such that $D$ contains no other
postcritical vertex of $\Gamma$.  Let $\widetilde{D}$ be the lift of
$D$ to $C_v$.  Let $\widetilde{v}_i$ be the lift of $v_i$ to $C_v$ for
$i\in\{1,\ldots,\ell\}$.  Now construct an orientation-preserving
homeomorphism $h\co S_2^2\to S_1^2$ such that (i)
$h(v_i)=\widetilde{v}_{i+1}$ for $i\in\{1,\ldots,\ell-1\}$, (ii)
$h(v_\ell )=v_1$, (iii) $h(\infty )=g^{-1}(\infty )$, (iv)
$h(D)=\widetilde{D}$ and (v) the portrait of $f:=g\circ h$ is
isomorphic to $\Gamma$.  Since $\partial D$ is by itself a degenerate
Levy cycle for $f$, this proves Theorem \ref{thm:obstructed} in case
(iii).

Finally, we consider case (iv), where $\Gamma$ has at least two
non-attractor cycles of length one.  Let $a$ and $b$ denote the
vertices of two non-attractor cycles of $\Gamma$ of length $1$.
Define the abstract portrait $\Gamma'$ as follows:
$V(\Gamma')=V(\Gamma)$, $\tau'(a)=b$, $\tau'(b)=a$, and
$\tau'(x)=\tau(x)$ for all $x\in V(\Gamma')\setminus\{a,b\}$. So
$\Gamma'$ satisfies (iii).  The proof for case (iii) shows that there
is a Thurston map $f$ realizing $\Gamma'$ that has a degenerate Levy
cycle consisting of a single curve which bounds a disk $D$ such that
$D\cap P_f = \{a,b\}$.  Postcompose $f$ with an orientation-preserving
homeomorphism $h:S^{2}\to S^{2}$ such that $h$ is the identity in the
complement of $D$, $h(b)=a$ and $h(a)=b$. Then $h\circ f$ is an
obstructed Thurston map whose portrait is isomorphic to $\Gamma$.
\end{proof}

\section{Classification of completely unobstructed portraits}
\label{sec:classification}

\begin{proof}[Proof of Theorem \ref{thm:classification}] For the
forward direction, we will prove the contrapositive.  Suppose that
$\Gamma$ is an abstract polynomial portrait that does not satisfy any
of the conditions (i)-(iv).  Since $\Gamma$ doesn't satisfy (i) and
(ii), $\Gamma$ has at least four postcritical vertices and there is a
non-attractor cycle.  Since $\Gamma$ doesn't satisfy (iii), either
$\Gamma$ has at least two non-attractor cycles or there is a
non-attractor cycle of length at least two.  If $\Gamma$ has at least
two nonattractor cycles, then it satisfies conditions (iii) or (iv) of
Theorem \ref{thm:obstructed} and so $\Gamma$ can be realized by an
obstructed Thurston map.  Now suppose that $\Gamma$ has a single
non-attractor cycle and it has length at least two.  If this cycle
doesn't contain all of the finite postcritical vertices, then $\Gamma$
can be realized by an obstructed map by condition (iii) of Theorem
\ref{thm:obstructed}.  So we may suppose that all of the finite
postcritical vertices are in a single non-attractor cycle.  If its
length is not a prime power, then $\Gamma$ can be realized by an
obstructed map by condtion (ii) of Theorem \ref{thm:obstructed}.  If
its length is a prime power, then since $\Gamma$ doesn't satisfy (iv)
it satisfies (i) of Theorem \ref{thm:obstructed} and $\Gamma$ can be
realized by an obstructed map.

For the reverse direction, suppose $\Gamma$ is an abstract polynomial
portrait and $f$ is a Thurston map whose portrait is isomorphic to
$\Gamma$.  If $\Gamma$ satisfies (i), then $f$ is unobstructed by
Thurston's characterization theorem since there aren't enough
postcritical points to have an obstruction.  If $\Gamma$ satifies
(ii), then $f$ is unobstructed by the Levy-Berstein theorem.  If
$\Gamma$ satisfies (iii) or (iv), then $f$ is unobstructed by Theorem
\ref{thm:unobstructed}.
\end{proof}


\end{document}